\theoremstyle{plain}
\newtheorem{theorem}{Theorem}[section]
\newtheorem{lemma}[theorem]{Lemma}
\newtheorem{proposition}[theorem]{Proposition}
\theoremstyle{definition}
\newtheorem{definition}[theorem]{Definition}
\newtheorem{remark}[theorem]{Remark}
\newcommand{\keywords}[1]{\noindent\textbf{Keywords:} #1}
\newcommand{\secref}[1]{Section~\ref{#1}} 
\newcommand{\eqnref}[1]{Eq.~\eqref{#1}}
\begin{document}

\title{A New Algorithm for Computing the Stabilizing Solution of General Periodic Time-Varying Stochastic Game-Theoretic Riccati Differential Equations}
\author{Yiyuan Wang\thanks{Shandong University Zhongtai Securities Institute for Financial Studies, Shandong University, 27 Shanda Nanlu, Jinan, P.R. China, 250100 ({Email: wangyiyuan@mail.sdu.edu.cn}).}}
\date{}

\maketitle

\begin{abstract}
We propose a new algorithm for a broad class of periodic time-varying Stochastic Game-Theoretic Riccati Differential Equations arising in Zero-Sum Linear-Quadratic Stochastic Differential Games. The algorithm is constructed via dual-layer matrix-valued functions iteration sequences, which reformulate the original problem into a set of interconnected bilevel subproblems. By sequentially computing the maximal periodic solutions to the Riccati differential equations associated with each subproblem, we derive the stabilizing periodic solutions for the original problem and rigorously prove the algorithm's convergence. Numerical experiments verifies algorithm effectiveness and stability. This study provides a unified numerical framework for solving a wider range of periodic time-varying Stochastic Game-Theoretic Riccati Differential Equations.\\
\keywords{Iteration Algorithm; Stochastic Game-Theoretic Riccati Differential Equations; Stabilizing Solutions; Zero-Sum Linear-Quadratic Stochastic Differential Games} 
\end{abstract}

\section{Introduction} 
\label{sec:introduction}

We focus on the following broad class of Stochastic Game-Theoretic Riccati Differential Equations (SGTRD\\Es):  
\begin{equation}
\label{pf:gtrde}
\begin{aligned}
\frac{d}{dt}{X}(t) &+ A_0^{\top}(t)X(t) + X(t)A_0(t) + \sum_{k=1}^r A_k^{\top}(t)X(t)A_k(t) + M(t) \\
&- \left[ X(t)B_0(t) + \sum_{k=1}^r A_k^{\top}(t)X(t)B_k(t) + L(t) \right] \left[ R(t) + \sum_{k=1}^r B_k^{\top}(t)X(t)B_k(t) \right]^{-1} \\
&\quad \times \left[ B^{\top}_0(t)X(t) + \sum_{k=1}^r B_k^{\top}(t)X(t)A_k(t) + L^{\top}(t) \right] = 0,\quad t \geq 0
\end{aligned}
\end{equation}  
where the unknown function $t \mapsto X(t) \in \mathbb{S}^n$. The parameters satisfy the following properties: $A_k(t) : \mathbb{R}_+ \to \mathbb{R}^{n \times n}$, $B_k(t) : \mathbb{R}_+ \to \mathbb{R}^{n \times m}$ ($k=0,\dots,r$), $L(t) : \mathbb{R}_+ \to \mathbb{R}^{n \times m}$, $M(t) : \mathbb{R}_+ \to \mathbb{S}^n$ and $R(t) : \mathbb{R}_+ \to \mathbb{S}^m$. All these matrix-valued functions are continuous and periodic with period $\theta > 0$.

This category of SGTRDEs originates from Zero-Sum Linear-Quadratic Stochastic Differential Game problems (ZSLQSDG). Specifically, the set of admissible solutions to the SGTRDEs defined in \eqref{pf:gtrde} consists of all mappings $X(\cdot): \mathbb{R}_+ \to \mathbb{S}^n$ that satisfy the following sign condition:  
\begin{equation}
\label{pf:sign_conditions_1}
\mathrm{sgn}\!\left[ R(t) + \sum_{k=1}^r B_k^{\top}(t)X(t)B_k(t) \right] = \mathrm{sgn} \, \mathrm{diag}\left(-I_{m_1}, I_{m_2}\right),m_1+m_2=m,\, \quad \forall \, t \in \mathbb{R}_{+}.    
\end{equation}  
A detailed description of ZSLQSDG problems will be provided later in this paper; for additional background, refer to \cite{Sun2020book}, \cite{Bacsar1998book}, and the references therein.   

Under the infinite-horizon setting, the stabilizing solution of SGTRDEs is a central topic in the study of control theory, game theory and Riccati differential equations. As the global solution to the equation, it enables the derivation of stable feedback gains for the system—critical for ensuring both stable operation and equilibrium performance over an infinite time horizon. \cite{Dragan2020} studies a class of coupled nonlinear matrix differential equations derived from the ZSLQSDG problems. The underlying dynamical system is governed by an It\^o equation with randomly switching coefficients. The paper provide sufficient conditions for the existence of bounded and stabilizing solutions to these Riccati equations. To differentiate this type of equation from $H_\infty$-type SGTRDEs (which serve as their subproblems), we term them ZSLQSDG-type SGTRDEs throughout this paper.

Notably, the stabilizing solution of a Riccati differential equation cannot be defined as either an initial-value problem (IVP) or a boundary-value problem (BVP) solution. This inherent property renders all existing numerical methods for solving IVPs or BVPs inapplicable to computing stabilizing solutions of Riccati differential equations. Furthermore, Kleinman-type iterative methods in \cite{Kleinman1968}—originally designed for problems with definite-sign quadratic terms—fail to extend to Riccati differential equations with indefinite-sign quadratic terms. For the theoretical and numerical results regarding the Riccati equation with a definite quadratic term, please refer to \cite{gajic1999,guo2001,freiling2003,damm2004,abou2012matrix,Dragan2013book} and its citations.

In the literature, \cite{Lanzon2008} proposed an iterative method for computing the stabilizing solution of game-theoretic algebraic Riccati equations in a deterministic framework. \cite{Feng2010,Dragan2011,Dragan2017,Ivanov2018} have continuously made progress in the numerical calculation of stabilizing solutions for a broader class of SGTRDEs. \cite{Dragan2015} further extended it to the  cases of the SGTRDEs associated with stochastic $H_\infty$ problems and the controlled systems described by It\^o differential equations with periodic coefficients. 

To date, no algorithm has been reported in the existing literature for ZSLQSDG-type SGTRDEs with periodic coefficients. This paper develops a new algorithm via the construction of dual-layer matrix iteration sequences, which reformulate the original problem into a set of interconnected bilevel subproblems. These subproblems are solved sequentially by identifying the maximal periodic solutions to their associated Riccati differential equations, ultimately yielding stabilizing periodic solutions for the original problem. 
This idea originates from defect correction method proposed in \cite{Mehrmann1988}. Importantly, this study extends classical settings and develops a unified algorithm applicable to three classes of problems with periodic coefficients: deterministic Game-Theoretic Riccati Differential Equations, stochastic $H_\infty$-type SGTRDEs, and the more general ZSLQSDG-type SGTRDEs.

Remaining content outline: Section \ref{sec:Preliminaries} covers ZSLQSDG's mathematical framework, algorithm iteration process, and a class of auxiliary problems. Section \ref{sec:main_results} presents main results: inner-outer matrix iteration sequence construction logic, proofs of existence, uniqueness and boundedness of stabilizing periodic solutions, and algorithm convergence. Section \ref{sec:numerical_experiments} verifies algorithm effectiveness and stability via numerical experiments.

\section{Preliminaries} 
\label{sec:Preliminaries}

\subsection{Notation}
Let us first introduce the following notation used in this paper:
\begin{itemize}
    \item $\mathbb{R}_+:[0,\infty)$; $\mathbb{R}^n$: $n$-dimensional Euclidean space; $\mathbb{R}^{n\times m}$: the set of all $n\times m$ real matrices; $\mathbb{S}^n$: the set of all $n\times n$ symmetric matrices; $\overline{\mathbb{S}}_{+}^n$: the set of all $n\times n$ symmetric positive semi-definite matrices; $\mathbb{S}_{+}^n$: the set of all $n\times n$ symmetric positive definite matrices; $\mathbf{C}^1(\mathbb{R}_+,\mathbb{S}^n)$: the set of all continuously differentiable functions of order 1 defined on the interval $\mathbb{R}_+$ with values in $ \mathbb{S}^n$; $\mathbf{B}(\mathbb{S}^n)$: the space of linear operators defined on $\mathbb{S}^n$.
    \item $I_n$: the identity matrix of size $n$; $\mathbb{O}_{n \times m}$: the null matrix of size $n \times m$. It can be simplified as 0 when no ambiguity is generated; $\mathbb{O}_{m \times n}^{\times (l)}$ denotes the object represented by $\mathbb{O}_{m \times n}$ arranged repeatedly for $l$ times.
    \item $A^{\top}$: the transpose of the matrix $A$; $\mathrm{Tr}(\cdot)$: the trace of a square matrix; $\langle\cdot,\cdot\rangle$: the inner product on a Hilbert space. In particular, the usual inner product on $\mathbb{R}^{n\times m}$ is given by $\langle A,B\rangle\mapsto\mathrm{Tr}(A^{\top}B)$.
    \item If $A\in\mathbb{S}_{+}^n$ (resp., $A\in\overline{\mathbb{S}}_{+}^n$) is a symmetric positive definite (resp., symmetric positive semi-definite) matrix, we write $A\succ0$ (resp., $A\succeq0$). For any $A,B\in\mathbb{S}^n$, we use the notation $A\succ B$ (resp., $A\succeq B$) to indicate that $A - B\succ0$ (resp., $A - B\succeq0$).
\end{itemize}

\subsection{A Class of Stochastic Game-Theoretic Riccati Differential Equations arising in Zero-Sum Linear-Quadratic Stochastic Differential Games}
We are interested in a class of Stochastic Game-Theoretic Riccati Differential Equations arising from the following problem. Let $(\Omega,\mathcal{F},\mathbb{F},\mathbb{P})$ be a complete filtered probability space on which a r-dimensional standard Brownian motion $W = \{(w_{1}(t),\dots,w_{r}(t))^{\top}; t\geq0\}$ is defined with $\mathbb{F}=\{\mathcal{F}_t\}_{t\geq0}$ being the usual augmentation of the natural filtration generated by $W$. We consider the following controlled linear stochastic differential equation (SDE) on $\mathbb{R}_+$:
\begin{equation} 
\label{lqzsdg:sde}
    \begin{cases} 
        dx(t) = \lbrack A_0(t)x(t) + B_{01}(t) u_1(t) + B_{02}(t) u_2(t) \rbrack dt + \sum_{k=1}^{r}\lbrack A_{k}(t)x(t) + B_{k1}(t) u_1(t) + B_{k2}(t)u_2(t) \rbrack dw_{k}(t) \\
        x(0) = x_0
    \end{cases}
\end{equation}
in which $x_0\in\mathbb{R}^n$, $u(t) = \begin{pmatrix} u_1(t) \\ u_2(t) \end{pmatrix}$ and $B_k(t, i) = \begin{pmatrix} B_{k1}(t) & B_{k2}(t) \end{pmatrix};B_{ki}(t) \in \mathbb{R}^{n \times m_i}(i = 1, 2;k=0,\dots,r)$. In the above, the state process $x$ is an $n$-dimensional vector-valued function, player 1 $u_1$ and player 2 $u_2$ are an $m_1$-dimensional vector and an $m_2$-dimensional vector-valued functions, respectively. 

Player 1 and Player 2 share the same performance functional:
\begin{equation}
\label{pf:lqzsdg_pf}
    J(x_0;u_1,u_2)\triangleq\mathbb{E}\int_{0}^{\infty}\Bigg[\Bigg\langle\begin{pmatrix}
    M(t) & L_1(t) & L_2(t)\\
    L_1^{\top}(t) & R_{11}(t) & R_{12}(t)\\
    L_2^{\top}(t) & R_{12}^{\top}(t) & R_{22}(t)
    \end{pmatrix}\begin{pmatrix}
    x(t)\\
    u_1(t)\\
    u_2(t)
    \end{pmatrix},\begin{pmatrix}
    x(t)\\
    u_1(t)\\
    u_2(t)
    \end{pmatrix}\Bigg\rangle
    \Bigg]dt
\end{equation}
where $L(t) = \begin{pmatrix} L_1(t) & L_2(t) \end{pmatrix};\, L_i(t) \in \mathbb{R}^{n \times m_i}(i = 1, 2;m_1+m_2=m)$, $R(t) = \begin{pmatrix} R_{11}(t) & R_{12}(t) \\ R_{12}^{\top}(t) & R_{22}(t) \end{pmatrix};\, R_{ij}(t) \\\in \mathbb{R}^{m_i \times m_j}(i, j = 1, 2)$.

In this zero-sum game, Player 1 (\textit{the maximizer}) selects control $u_1$ to maximize \eqref{pf:lqzsdg_pf}, while Player 2 (\textit{the minimizer}) chooses $u_2$ to minimize the same function. The problem is to find an admissible control pair $(u_1^*,u_2^*)$ that both players can accept. For a description of ZSLQSDG, refer to \cite{Dragan2020} and \cite{Sun2020book}. More detailed information can be found therein. 

\begin{remark}
    \Cite{Dragan2020} discusses some properties of the solution set satisfying the sign condition \ref{pf:sign_conditions_1} under certain natural assumptions, and points out its correspondence with the classical ZSLQSDG problem in Section 3. The parameter setting refers to Assumptions 1 in the paper. 
\end{remark}

\begin{remark}
Employing Lemma 2 in Chapter 4 from the work of \cite{halanay2012book}, we obtain the following:
If $ X(\cdot)$ is be a $\theta$-periodic solution of the SGTRDE \eqref{pf:gtrde} and $X(t) \in \mathbb{S}^n_+$ for all $t \in \mathbb{R}_{+}$, then the following are equivalent:
\begin{enumerate}
    \item[(i)] the solution $X(\cdot)$ satisfies the sign condition \ref{pf:sign_conditions_1};
    \item[(ii)] the solution $X(\cdot)$ satisfies the sign condition: \footnote{To simplify the notation for subsequent content, whenever $(t,X(t))$ is involved, it is abbreviated as $(t,X_t)$ by default. For example, $\mathbb{R}_{22}(t,X_t)=\mathbb{R}_{22}(t,X(t))$.}
    \begin{equation}
    \label{pf:sign_conditions_1e}
    \begin{aligned}
    &\mathbb{R}_{22}(t, X_t) = R_{22}(t) + \sum_{k=1}^{r} B^{\top}_{k2}(t) X(t) B_{k2}(t) \succ 0;\forall t \in \mathbb{R}_{+}, \\
    &\mathbb{R}_{22}^{\sharp}(t, X_t) = R_{11}(t) + \sum_{k=1}^{r} B_{k1}^{\top}(t) X(t) B_{k1}(t) - \left[ R_{12}(t) + \sum_{k=1}^{r} B_{k1}^{\top}(t) X(t) B_{k2}(t) \right] \mathbb{R}_{22}(t, X_t)^{-1} \\
    &\quad \left[ R_{12}(t) + \sum_{k=1}^{r} B_{k1}^{\top}(t) X(t) B_{k2}(t) \right]^{\top} \prec 0;\forall t \in \mathbb{R}_{+}.
    \end{aligned}
    \end{equation}
\end{enumerate}
\end{remark}

\begin{definition}
A global solution $\tilde{X}(\cdot): \mathbb{R}_+ \to \mathbb{S}^n$ of the SGTRDE \eqref{pf:gtrde} is called a \textit{stabilizing solution} if the following conditions hold:
\begin{enumerate}
    \item[(i)] $\inf_{t \in \mathbb{R}_+} \left| \det\left( R(t) + \sum_{k=1}^r B_k^{\top}(t)\tilde{X}(t)B_k(t) \right) \right| > 0$;  
    (i.e., the matrix $R(t) + \sum_{k=1}^r B_k^{\top}(t)\tilde{X}(t)B_k(t)$ is uniformly invertible on $\mathbb{R}_+$.)
    \item[(ii)] The closed-loop system
    \begin{equation*}
    dx(t) = \left[ A_0(t) + B_0(t)F(t,\tilde{X}_t) \right]x(t)dt + \sum_{k=1}^r \left[ A_k(t) + B_k(t)F(t,\tilde{X}_t) \right]x(t)dw_k(t)
    \end{equation*}
    is such that its zero solution is exponentially stable in mean square. This is denoted concisely as the system $\left( A_0(\cdot) + B_0(\cdot)F(\cdot,\cdot), \, A_1(\cdot) + B_1(\cdot)F(\cdot,\cdot), \, \dots, \, A_r(\cdot) + B_r(\cdot)F(\cdot,\cdot) \right)$ being stable, where $F(t,X_t)$ for all $t \in \mathbb{R}_+$ is defined by
    \begin{equation}
    \label{pf:esms_f}
    F(t,X_t) = - \left[ R(t) + \sum_{k=1}^r B_k^{\top}(t)X(t)B_k(t) \right]^{-1} \left[ B_0^{\top}(t)X(t) + \sum_{k=1}^r B_k^{\top}(t)X(t)A_k(t) + L^{\top}(t) \right].
    \end{equation}
\end{enumerate}
\end{definition}

\subsection{Algorithm Design}
\label{sec:algorithm_design}

In this paper, we aim to compute the the stabilizing solution numerically for SGTRDEs arising in classical ZSLQSDG with periodic time-varying parameter.
Consider the sequences $\{X^{(h)}(\cdot)\}_{h\geq0}$ and $\{Z^{(h)}(\cdot)\}_{h\geq0}$ constructed according to the following procedure:  
\begin{enumerate}
    \item [1.]
    For $h = 0$, set $X^{(0)}(\cdot) = 0$, and let $Z^{(0)}(\cdot)$ be the unique $\theta$-periodic and stabilizing solution to the following Riccati differential equations with the definite-sign quadratic term:
    \begin{equation}
    \label{algorithm:initial}
        \begin{aligned}
        \frac{d}{dt}Z^{(0)}(t)&+ A^{\top}_{0,(0)}(t) Z^{(0)}(t)+Z^{(0)}(t)A_{0,(0)}(t)+\sum_{k=1}^{r}A^{\top}_{k,(0)}(t) Z^{(0)}(t)A_{k,(0)}(t) + M(t) - L(t)R(t)^{-1}L^{\top}(t)\\
        &- \left[ Z^{(0)}(t)B_{02}(t)+ \sum_{k=1}^{r} A^{\top}_{k,(0)}(t)Z^{(0)}(t)B_{k2}(t) \right] \left[R_{22}(t) + \sum_{k=1}^{r}B_{k2}^{\top}(t) Z^{(0)}(t) B_{k2}(t)\right]^{-1} \\
        &\quad \times \left[ B_{02}^{\top}(t)Z^{(0)}(t) + \sum_{k=1}^{r} B_{k2}^{\top}(t)Z^{(0)}(t)A_{k,(0)}(t) \right]= 0, \quad t \geq 0.     
        \end{aligned}
    \end{equation}
    where $A_{k,(0)}(t) = A_{k}(t) + B_{k}(t)F(t,0)(\forall t \in \mathbb{R}_+,k=0,\dots,r).$
    \item [2.]
    For $h \geq 1$, update $X^{(h)}(\cdot)$ via: 
    \begin{equation}
    \label{algorithm:external_circulation}
            \begin{aligned}
                X^{(h)}(t) = X^{(h-1)}(t) + Z^{(h-1)}(t), \forall t \in \mathbb{R}_+,     
            \end{aligned} 
    \end{equation}
    and solving $Z^{(h)}(\cdot)$, which is the unique $\theta$-periodic and stabilizing solution to the following Riccati differential equations with the definite-sign quadratic term:
    \begin{equation}
    \label{algorithm:internal_circulation}
        \begin{aligned}
        &\frac{d}{dt}(P^{(h)}(t)+Z^{(h)}(t))+ A^{\top}_{0,(h)}(t) Z^{(h)}(t)+Z^{(h)}(t)A_{0,(h)}(t)+\sum_{k=1}^{r}A^{\top}_{k,(h)}(t) Z^{(h)}(t)A_{k,(h)}(t) \\
        &+ V^{\top}_{(h)}(t)\mathbb{R}_{22}^{\sharp}(t, X^{(h)}_t)^{-1}V_{(h)}(t)- \left[ Z^{(h)}(t)B_{02}(t) + \sum_{k=1}^{r} A^{\top}_{k,(h)}(t)Z^{(h)}(t)B_{k2}(t) \right]\\
        &\quad \times \left[R_{22}(t) + \sum_{k=1}^{r}B_{k2}^{\top}(t) Z^{(h)}(t) B_{k2}(t)\right]^{-1} \left[ B_{02}^{\top}(t)Z^{(h)}(t) + \sum_{k=1}^{r} B_{k2}^{\top}(t)Z^{(h)}(t)A_{k,(h)}(t) \right]= 0, \,t \geq 0.     
        \end{aligned}
    \end{equation}
    where $A_{k,(h)}(t) = A(t) + B(t)F(t,X^{(h)}_t)(k=0,\dots,r)$ and $V_{(h)}(t)=$
    \begin{equation}
    \label{algorithm:vxz}
    \begin{pmatrix}I_{m_1} & -R_{21}(t,X^{(h)}_t)R_{22}(t,X^{(h)}_t)^{-1}\end{pmatrix}[B^{\top} _{0}(t)Z^{(h-1)}(t) + \sum_{k=1}^{r}B^{\top}_{k}(t)Z^{(h-1)}(t)A_{k,(h-1)}(t)],\forall t \in \mathbb{R}_+. 
    \end{equation}
\end{enumerate}

In the subsequent content, we will show:
\begin{itemize}
    \item The sequences $\{Z^{(h)}(\cdot)\}$ and $\{X^{(h)}(\cdot)\}$ are well-defined, and for each $h \geq 0$, the corresponding Riccati differential equations admits a unique $\theta$-periodic, stabilizing solution $Z^{(h)}(\cdot)$ with $Z^{(h)}(t) \in \overline{\mathbb{S}}_{+}^n$ for all $t \in \mathbb{R}_+$.
    \item These sequences are convergent and we have  
    \[
        \lim_{h\to\infty} X^{(h)}(t) = \tilde{X}(t), \, \lim_{k\to\infty} Z^{(h)}(t) = 0,\,\forall t \in \mathbb{R}_+,
    \] 
    where $\tilde{X}(\cdot)$ is unique $\theta$-periodic and stabilizing solution to the SGTRDE \eqref{pf:gtrde}.
\end{itemize}

\subsection{A class of Auxiliary Problems}

In order to facilitate the analysis of SGTRDEs \eqref{pf:gtrde}, we introduce a fundamental transformation of the control variable. Specifically, we first eliminate the bias term in the problem through transformation, then parameterize Player 2's control as a feedback law that depends linearly on both the state $x(t)$ and the opponent's control. This approach enables us to embed the original problems into the auxiliary problems, decoupling the problem structure without altering their essence.

Setting formally 
$
\begin{bmatrix}  v_1(t) \\ v_2(t) \end{bmatrix}=\begin{bmatrix}  u_1(t) \\ u_2(t) \end{bmatrix}+ R(t)^{-1} L^{\top}(t)
$ 
and $v_2(t) = K(t)x(t) + W(t)v_1(t)$ for all $t \in \mathbb{R}_+$, we obtain
\begin{equation} 
\label{pr:sde_kw}
    \begin{cases} 
        dx(t) = \lbrack A_{0K}(t)x(t) + B_{0W}(t)v_1(t) \rbrack dt + \sum_{k=1}^{r}\lbrack A_{kK}(t)x(t) + B_{kW}(t)v_1(t) \rbrack dw_{k}(t) \\
        x(0) = x_0
    \end{cases}
\end{equation}
in which $x_0\in\mathbb{R}^n$, and 
\begin{equation}
\label{pr:pf_kw}
    J_{KW}(x_0;u_1)\triangleq\mathbb{E}\int_{0}^{\infty}\Bigg[\Bigg\langle\begin{pmatrix}
    M_K(t) & L_{KW}(t) \\
    L_{KW}^{\top}(t) & R_W(t)
    \end{pmatrix}\begin{pmatrix}
    x(t)\\
    v_1(t)
    \end{pmatrix},\begin{pmatrix}
    x(t)\\
    v_1(t)
    \end{pmatrix}\Bigg\rangle
    \Bigg]dt,
\end{equation}
where
\[
\begin{cases}
A_{kK}(t) = A_k(t) + B_{k}(t)F(t,0) + B_{k2}(t)K(t), & k=0,\dots,r \\
B_{kW}(t)= B_{k1}(t) + B_{k2}(t)W(t), & k=0,\dots,r \\
M_K(t) = M(t) + K^{\top}(t)R_{12}(t)K(t) \\
L_{KW}(t) = K^{\top}(t)R_{12}^{\top}(t) + K^{\top}(t)R_{22}(t)W(t) \\
R_W(t) = \begin{pmatrix} I_{m_1} \\ W(t) \end{pmatrix}^{\top} \begin{pmatrix} R_{11}(t) & R_{12}(t) \\ R_{12}^{\top}(t) & R_{22}(t) \end{pmatrix} \begin{pmatrix} I_{m_1} \\ W(t) \end{pmatrix}
\end{cases}
\]
The corresponding Riccati differential equations is
\begin{equation}
\label{pr:sgrde_kw}
\begin{aligned}
\frac{d}{dt}{Y}(t) &+ A_{0K}^{\top}(t)Y(t) + Y(t)A_{0K}(t) + \sum_{k=1}^r A_{kK}^{\top}(t)Y(t)A_{kK}(t)+ M_K(t) \\
&- \left[Y(t)B_{0W}(t) + \sum_{k=1}^r A_{kK}^{\top}(t)Y(t)B_{kW}(t) + L_{KW}(t)\right] \left[R_W(t) + \sum_{k=1}^r B_{kW}^{\top}(t)Y(t)B_{kW}(t)\right]^{-1}\\
&\quad \times \left[ B^{\top}_{0W}(t)Y(t) + \sum_{k=1}^r B^{\top}_{kW}(t)Y(t)A_{kK}^{\top}(t) + L^{\top}_{KW}(t) \right] = 0,\quad t \geq 0.
\end{aligned} 
\end{equation}
Throughout this work, parameters associated with the SGTRDEs in \eqref{pf:gtrde} are formally denoted by $\Sigma$. Let $\mathcal{A}^{\Sigma}$ stands for the set of all continuous and $\theta$-periodic functions $(K(\cdot), W(\cdot))$, where $t \to K(t) : \mathbb{R}_+ \to \mathbb{R}^{m_2 \times n} $ and $t \to W(t) : \mathbb{R}_+ \to \mathbb{R}^{m_2 \times m_1} $ satisfy: 
\begin{itemize}
    \item The system $(A_{0K},\dotsb,A_{rK})$ is stable;
    \item The corresponding Riccati differential equations \eqref{pr:sgrde_kw} has a $\theta$-periodic and stabilizing solution $\tilde{Y}_{KW}(\cdot)$, satisfying the sign conditions
    \[
    R_W(t) + \sum_{k=1}^r B_{kW}^{\top}(t)\tilde{Y}_{KW}(t)B_{kW}(t) \prec 0, \forall t \in \mathbb{R}_+ .
    \]
\end{itemize}

\section{The Main Results}
\label{sec:main_results}

\subsection{Analysis of Structural Characteristics for SGTRDEs}

From the SGTRDEs \eqref{pf:gtrde}, we can define a mapping $\mathcal{G}: \mathrm{Dom}\,\mathcal{G} \to \mathbb{S}^n$ that satisfies the following relation:
\begin{equation}
\label{main_results:function_g}
\begin{aligned}
    & \mathcal{G}(t,X) =A_0^{\top}(t)X + XA_0(t) + \sum_{k=1}^r A_k^{\top}(t)XA_k(t)+ M(t) \\
    &- \left[ XB_0(t) + \sum_{k=1}^r A_k^{\top}(t)XB_k(t) + L(t) \right] \left[ R(t) + \sum_{k=1}^r B_k^{\top}(t)XB_k(t) \right]^{-1} \left[ B^{\top}_0(t)X + \sum_{k=1}^r B^{\top}_k(t)XA_k(t) + L^{\top}(t) \right].     
\end{aligned}
\end{equation}
The nonlinear function $\mathcal{G}$ is well-defined on the subset:
\[
    \mathrm{Dom}\,\mathcal{G}=\left\{(t,X) \in \mathbb{R}_+ \times \mathbb{S}^n \,| \, \mathbb{R}_{22}^{\sharp}(t, X) \prec 0 \,\, \text{and} \,\, \mathbb{R}_{22}(t, X) \succ 0 \right\}.   
\]
Now we rewrite the SGTRDEs \eqref{pf:gtrde} in the following compact form:
\begin{equation*}
\begin{aligned}
    \frac{d}{dt}{X}(t)+\mathcal{G}(t,X_t) =0, \,\, t \geq 0.     
\end{aligned}
\end{equation*}
The mapping $\mathcal{G}$ serves as the fundamental basis for algorithm construction. First, we conduct a analysis on SGTRDEs and present its relevant structural properties of $\mathcal{G}$. Building on these properties, we transform the original problem into related subproblems and establish a sequence of interrelated subproblem Riccati differential equations.

\begin{proposition}
\label{function_g:proposition_1}
Let $(t,X),(t,X+Z) \in \mathrm{Dom}\,\mathcal{G}$, then
\begin{align*}
    F(t,X+Z)=F(t,X)- R(t,X+Z)^{-1} N(t,X,Z),
\end{align*}
where 
\begin{align*}
R(t,X) =R(t) + \sum_{k=1}^r B_k^{\top}(t)XB_k(t),\,
N(t,X,Z)=B_{0}^{\top}(t)Z + \sum_{k=1}^{r}B_{k}^{\top}(t)Z(A_k(t)+ B_{k}(t)F(t,X)).
\end{align*}
\end{proposition}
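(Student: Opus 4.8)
The plan is to treat this as a purely algebraic (resolvent-type) identity, since invertibility of every matrix appearing below is already guaranteed by the hypothesis $(t,X),(t,X+Z)\in\mathrm{Dom}\,\mathcal{G}$: membership forces $\mathbb{R}_{22}(t,\cdot)\succ 0$ and $\mathbb{R}_{22}^{\sharp}(t,\cdot)\prec 0$, so the matrices $R(t,X)$ and $R(t,X+Z)$ are nonsingular and all inverses are well defined. To streamline the computation I would first abbreviate the ``cross term'' by writing
\[
S(t,X)=B_0^{\top}(t)X+\sum_{k=1}^r B_k^{\top}(t)XA_k(t)+L^{\top}(t),
\]
so that the feedback gain \eqref{pf:esms_f} reads compactly as $F(t,X)=-R(t,X)^{-1}S(t,X)$, with $R(t,X)$ as defined in the statement.

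Next I would exploit the fact that both $R(t,\cdot)$ and $S(t,\cdot)$ are \emph{affine} in the matrix argument. Substituting $X+Z$ and collecting the increment linear in $Z$ gives
\[
R(t,X+Z)=R(t,X)+\Delta R,\qquad S(t,X+Z)=S(t,X)+\Delta S,
\]
where $\Delta R=\sum_{k=1}^r B_k^{\top}(t)ZB_k(t)$ and $\Delta S=B_0^{\top}(t)Z+\sum_{k=1}^r B_k^{\top}(t)ZA_k(t)$. Since $F(t,X+Z)=-R(t,X+Z)^{-1}S(t,X+Z)$, the target reduces to showing that the difference $F(t,X+Z)-F(t,X)$ equals $-R(t,X+Z)^{-1}N(t,X,Z)$.

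The single computational step is to factor $R(t,X+Z)^{-1}$ out of that difference. Rewriting $F(t,X)=-R(t,X+Z)^{-1}\bigl[R(t,X+Z)R(t,X)^{-1}\bigr]S(t,X)$ and combining the two terms yields
\[
F(t,X+Z)-F(t,X)=-R(t,X+Z)^{-1}\Bigl[\,S(t,X+Z)-R(t,X+Z)R(t,X)^{-1}S(t,X)\,\Bigr].
\]
Using $R(t,X+Z)R(t,X)^{-1}=I+\Delta R\,R(t,X)^{-1}$ together with $S(t,X+Z)=S(t,X)+\Delta S$, the bracket simplifies to $\Delta S-\Delta R\,R(t,X)^{-1}S(t,X)$. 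I would then substitute the defining relation $R(t,X)^{-1}S(t,X)=-F(t,X)$, turning the bracket into $\Delta S+\Delta R\,F(t,X)$. Expanding $\Delta R\,F(t,X)=\sum_{k=1}^r B_k^{\top}(t)ZB_k(t)F(t,X)$ and regrouping against $\Delta S$ gives exactly $B_0^{\top}(t)Z+\sum_{k=1}^r B_k^{\top}(t)Z\bigl(A_k(t)+B_k(t)F(t,X)\bigr)=N(t,X,Z)$, which closes the identity.

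There is no genuine obstacle here: the result is a first-order perturbation formula, and the only points requiring care are keeping the noncommutative matrix products in the correct left/right order and confirming at the outset that $R(t,X+Z)$ is invertible so that the factorization through $R(t,X+Z)R(t,X)^{-1}$ is legitimate. The latter is immediate from $(t,X+Z)\in\mathrm{Dom}\,\mathcal{G}$, so the verification is entirely mechanical once the shorthand $S(t,X)$ is introduced.
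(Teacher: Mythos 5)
Your proof is correct and follows essentially the same route as the paper's: both exploit the affine dependence of $R(t,\cdot)$ and $S(t,\cdot)$ on the matrix argument and the relation $R(t,X)F(t,X)=-S(t,X)$, differing only cosmetically in that you factor $R(t,X+Z)^{-1}$ out of the difference while the paper premultiplies the increment $\Delta F$ by $R(t,X+Z)$. Your explicit remark that $(t,X),(t,X+Z)\in\mathrm{Dom}\,\mathcal{G}$ guarantees invertibility of $R(t,X)$ and $R(t,X+Z)$ is a point the paper leaves implicit, and is a welcome addition.
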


\begin{proof}
Define the increment $\Delta F (t,X,Z) = F(t, X+Z) - F(t, X)$.
By the definition of $F(\cdot,\cdot)$ in \eqref{pf:esms_f} and multiplying both sides by $R(t, X+Z) $ we obtain: 
\[
R(t, X+Z) \Delta F (t,X,Z) = R(t, X+Z) F(t, X+Z) - R(t, X+Z) F(t, X).
\]
Note that
\[
R(t, X+Z) = R(t, X) + \Delta R (t, Z), \,\, \text{where} \,\, \Delta R (t, Z)= \begin{bmatrix}
\sum_{k=1}^{r} B_{k1}^{\top}(t) Z B_{k}(t) \\
\sum_{k=1}^{r} B_{k2}^{\top}(t) Z B_{k}(t) 
\end{bmatrix}.
\]
Thus, $ R(t, X+Z) \Delta F (t,X,Z)  = -S(t, X+Z) + S(t, X) - \Delta R(t, Z) F(t, X)$
\begin{align*}
=&- \begin{bmatrix} B_{01}^{\top}(t) Z + \sum_{k=1}^{r} B_{k1}^{\top}(t) Z A_k(t) \\ B_{02}^{\top}(t) Z + \sum_{k=1}^{r} B_{k2}^{\top}(t) Z A_k(t) \end{bmatrix}-\begin{bmatrix} \sum_{k=1}^{r} B_{k1}^{\top}(t) Z B_k(t) F(t, X) \\ \sum_{k=1}^{r} B_{k2}^{\top}(t) Z B_k(t) F(t, X) \end{bmatrix}\\
=& - \begin{bmatrix} B_{01}^{\top}(t) Z + \sum_{k=1}^{r} B_{k1}^{\top}(t) Z \left( A_k(t) + B_k(t) F(t, X) \right) \\ B_{02}^{\top}(t) Z + \sum_{k=1}^{r} B_{k2}^{\top}(t) Z \left( A_k(t) + B_k(t) F(t, X) \right) \end{bmatrix},   
\end{align*}
where $S(t, X)=B_{0}^{\top}(t)X + \sum_{k=1}^{r}B_{k}^{\top}(t)XA_k(t)+ L^{\top}(t)$. 

Multiplying both sides by $R(t, X+Z)^{-1}$ yields:
\[
F(t, X+Z) = F(t, X) - R(t, X+Z)^{-1} \begin{bmatrix} B_{01}^{\top}(t) Z + \sum_{k=1}^{r} B_{k1}^{\top}(t) Z \left( A_k(t) + B_k(t) F(t, X) \right) \\ B_{02}^{\top}(t) Z + \sum_{k=1}^{r} B_{k2}^{\top}(t) Z \left( A_k(t) + B_k(t) F(t, X) \right) \end{bmatrix}.
\]
\end{proof}

\begin{proposition}[\cite{Dragan2013book}]
\label{function_g:proposition_2}    
Let $(t,X) \in \mathrm{Dom}\,\mathcal{G}$, then
\begin{equation*}
    \begin{aligned}
        \mathcal{G}&(t,X)=(A_0(t) + B_{0}(t)\varTheta)^{\top}X+X(A_0(t) + B_{0}(t)\varTheta)\\
        &+ \sum_{k=1}^{r}(A_k(t) + B_{k}(t)\varTheta)^{\top} X (A_k(t) + B_{k}(t)\varTheta)+ M(t)+ \varTheta^{\top}R(t)\varTheta+ \varTheta^{\top}L^{\top}(t) + L(t)\varTheta\\
        &- (F(t,X)-\varTheta)^{\top} R(t,X) (F(t,X)-\varTheta),
    \end{aligned}
\end{equation*}
for any $\varTheta \in \mathbb{R}^{m \times n}$, where $F(t,X)$ is defined in \eqref{pf:esms_f}. 
\end{proposition}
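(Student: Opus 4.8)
The plan is to treat the asserted identity as a completing-the-square relation and verify it by direct expansion, organized by powers of $\varTheta$. The useful preliminary observation is that, with $S(t,X)=B_0^{\top}(t)X+\sum_{k=1}^r B_k^{\top}(t)XA_k(t)+L^{\top}(t)$ and $R(t,X)=R(t)+\sum_{k=1}^r B_k^{\top}(t)XB_k(t)$ as in Proposition~\ref{function_g:proposition_1}, the definition \eqref{pf:esms_f} reads $F(t,X)=-R(t,X)^{-1}S(t,X)$, so $R(t,X)F(t,X)=-S(t,X)$; since $R(t,X)\in\mathbb{S}^m$, transposing also gives $F(t,X)^{\top}R(t,X)=-S(t,X)^{\top}$. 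Using $S(t,X)^{\top}R(t,X)^{-1}S(t,X)=F(t,X)^{\top}R(t,X)F(t,X)$, the defining formula \eqref{main_results:function_g} collapses to the compact form
\[
\mathcal{G}(t,X)=A_0^{\top}(t)X+XA_0(t)+\sum_{k=1}^r A_k^{\top}(t)XA_k(t)+M(t)-F(t,X)^{\top}R(t,X)F(t,X),
\]
which is the expression I will aim to recover.

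The conceptual core is that the right-hand side of the proposition, viewed as a function of $\varTheta$, is in fact independent of $\varTheta$: the term $-(F(t,X)-\varTheta)^{\top}R(t,X)(F(t,X)-\varTheta)$ is precisely engineered to absorb the $\varTheta$-dependence introduced by the Lyapunov-type and stochastic quadratic terms. I would therefore expand $(A_k(t)+B_k(t)\varTheta)^{\top}X(A_k(t)+B_k(t)\varTheta)$ and the square term and collect by degree in $\varTheta$. The degree-two part of the expansion contributes $\varTheta^{\top}\bigl(R(t)+\sum_{k=1}^r B_k^{\top}(t)XB_k(t)\bigr)\varTheta=\varTheta^{\top}R(t,X)\varTheta$, cancelled exactly by the $-\varTheta^{\top}R(t,X)\varTheta$ from the square. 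The degree-one part of the Lyapunov, quadratic, and bias terms is $S(t,X)^{\top}\varTheta+\varTheta^{\top}S(t,X)$, while the degree-one part of the square term is $F^{\top}R(t,X)\varTheta+\varTheta^{\top}R(t,X)F=-S(t,X)^{\top}\varTheta-\varTheta^{\top}S(t,X)$ by the identities above, so these cancel as well.

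Having established $\varTheta$-independence, I would finish by evaluating at the convenient choice $\varTheta=F(t,X)$, where the square term vanishes identically. The surviving expression is $(A_0+B_0F)^{\top}X+X(A_0+B_0F)+\sum_k(A_k+B_kF)^{\top}X(A_k+B_kF)+M+F^{\top}RF+F^{\top}L^{\top}+LF$; regrouping the cross terms as $F^{\top}S(t,X)+S(t,X)^{\top}F$ and the purely quadratic-in-$F$ contributions as $F^{\top}R(t,X)F$, and applying $R(t,X)F=-S(t,X)$ once more, the combination $F^{\top}S+S^{\top}F+F^{\top}R(t,X)F$ collapses to $-F^{\top}R(t,X)F$, reproducing exactly the compact form of $\mathcal{G}(t,X)$ obtained above.

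The computation carries no genuine conceptual obstacle; the only real difficulty is bookkeeping. The two points demanding care are keeping transposes consistent throughout, so that each term and its adjoint are paired correctly, and invoking the symmetry of $R(t,X)$ at the right moments, which is what legitimizes $F^{\top}R(t,X)=-S(t,X)^{\top}$ and makes the cross terms telescope. One could alternatively avoid separately tracking the $\varTheta$-linear terms by verifying the identity only at $\varTheta=F(t,X)$ and then checking that the $\varTheta$-gradient of the right-hand side vanishes; since the $\varTheta$-dependence is at most quadratic and its quadratic coefficient already cancels, both routes rest on the same underlying cancellations.
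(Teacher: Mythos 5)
Your proof is correct. Note, however, that the paper offers no proof of Proposition~\ref{function_g:proposition_2} at all: it is stated with a citation to \cite{Dragan2013book} and imported as a known result, so there is no in-paper argument to match. Your blind verification is precisely the standard completing-the-square computation that underlies the cited result: writing $F(t,X)=-R(t,X)^{-1}S(t,X)$ with $S(t,X)=B_0^{\top}(t)X+\sum_{k=1}^r B_k^{\top}(t)XA_k(t)+L^{\top}(t)$, collapsing \eqref{main_results:function_g} to $\mathcal{G}(t,X)=A_0^{\top}(t)X+XA_0(t)+\sum_{k=1}^r A_k^{\top}(t)XA_k(t)+M(t)-F^{\top}(t,X)R(t,X)F(t,X)$, and then checking degreewise in $\varTheta$ that the quadratic terms contribute $\varTheta^{\top}R(t,X)\varTheta$ and the linear terms $\varTheta^{\top}S+S^{\top}\varTheta$, both absorbed exactly by the subtracted square. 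Your two load-bearing observations are the right ones: the symmetry of $R(t,X)$, which legitimizes $F^{\top}(t,X)R(t,X)=-S^{\top}(t,X)$ and makes the cross terms cancel, and (implicitly) the invertibility of $R(t,X)$ on $\mathrm{Dom}\,\mathcal{G}$, which follows from the conditions $\mathbb{R}_{22}(t,X)\succ 0$ and $\mathbb{R}_{22}^{\sharp}(t,X)\prec 0$ defining the domain and is what makes $F(t,X)$ in \eqref{pf:esms_f} well defined; it would be worth stating that second point explicitly. Your closing alternative (verify at $\varTheta=F(t,X)$ and note the quadratic coefficient in $\varTheta$ cancels, so the affine remainder must vanish by checking the gradient) is sound but, as you say, rests on the same cancellations.
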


\begin{proposition}
\label{function_g:proposition_3}
Let $(t,X),(t,X+Z) \in \mathrm{Dom}\,\mathcal{G}$, then 
\begin{equation}
\label{function_g:proposition_1_xz}   
\begin{aligned}
    &\mathcal{G}(t,X+Z) = \mathcal{G}(t,X)+ \left( A_0(t)+B_{0}(t)F(t,X) \right)^{\top} Z + Z \left( A_0(t)+ B_{0}(t)F(t,X) \right)\\
    &+\sum_{k=1}^{r}\left( A_k(t)+ B_{k}(t)F(t,X) \right)^{\top} Z \left(A_k(t)+ B_{k}(t)F(t,X) \right)- N^{\top}(t,X,Z) R(t,X+Z)^{-1} N(t,X,Z),
\end{aligned}    
\end{equation}
where $F(t,X)$ is defined in \eqref{pf:esms_f} and
\begin{align*}
    R(t,X)= R(t) + \sum_{k=1}^r B_k^{\top}(t)X B_k(t),
    N(t,X,Z)=B_{0}(t)Z + \sum_{k=1}^{r}B_{k}^{\top}(t)Z(A_k(t)+ B_{k}(t)F(t,X)).
\end{align*}
\end{proposition}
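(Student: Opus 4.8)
The plan is to prove this linearization identity \emph{not} by expanding the definition \eqref{main_results:function_g} of $\mathcal{G}(t,X+Z)$ directly (the inverse $R(t,X+Z)^{-1}$ buried in the quadratic term makes brute-force regrouping unpleasant), but by invoking the representation of Proposition \ref{function_g:proposition_2} twice, each time with one carefully chosen feedback matrix. Recall that Proposition \ref{function_g:proposition_2} writes $\mathcal{G}(t,\cdot)$ through an \emph{arbitrary} $\varTheta \in \mathbb{R}^{m\times n}$. The key move is to evaluate that representation at both arguments $X$ and $X+Z$ using the \emph{same} frozen feedback $\varTheta = F(t,X)$, rather than each argument's own minimizing feedback.

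First I would apply Proposition \ref{function_g:proposition_2} to $\mathcal{G}(t,X)$ with $\varTheta = F(t,X)$; since then $F(t,X)-\varTheta = 0$, the closing quadratic term $-(F(t,X)-\varTheta)^\top R(t,X)(F(t,X)-\varTheta)$ vanishes identically, leaving only terms in $X$, $M(t)$, and $F(t,X)$. Next I would apply the same representation to $\mathcal{G}(t,X+Z)$, again with $\varTheta = F(t,X)$. Because $\varTheta$ is unchanged, the matrices $A_k(t)+B_k(t)\varTheta = A_k(t)+B_k(t)F(t,X)$ are identical in both expansions, the state-linear and state-quadratic blocks now carry the argument $X+Z$ in place of $X$, and the closing term becomes $-(F(t,X+Z)-F(t,X))^\top R(t,X+Z)(F(t,X+Z)-F(t,X))$, which is no longer zero. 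Subtracting the two expansions, every term free of $Z$ cancels (the $M(t)$, $F(t,X)^\top R(t)F(t,X)$, $F(t,X)^\top L^\top(t)$, and $L(t)F(t,X)$ contributions coincide), and what survives from the linear/quadratic blocks is exactly the linear-in-$Z$ expression claimed in \eqref{function_g:proposition_1_xz}.

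It then remains to recast the closing term. Here I would invoke Proposition \ref{function_g:proposition_1}, which gives $F(t,X+Z)-F(t,X) = -R(t,X+Z)^{-1}N(t,X,Z)$. Substituting this and using that $R(t,X+Z) = R(t) + \sum_{k=1}^r B_k^\top(t)(X+Z)B_k(t)$ is symmetric (hence so is its inverse), the closing term collapses:
\begin{equation*}
-\left(-R(t,X+Z)^{-1}N\right)^\top R(t,X+Z)\left(-R(t,X+Z)^{-1}N\right) = -\,N^\top(t,X,Z)\,R(t,X+Z)^{-1}\,N(t,X,Z),
\end{equation*}
which is precisely the remainder in the statement. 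The one genuine subtlety — and the step I expect to be the crux — is the deliberate choice to freeze $\varTheta = F(t,X)$ for \emph{both} evaluations instead of using each argument's natural minimizing feedback: this is exactly what realizes the linearization of $\mathcal{G}$ about $X$ along $Z$ with a frozen gain and makes the perturbation term $N^\top R(t,X+Z)^{-1}N$ emerge in clean form. The hypothesis $(t,X),(t,X+Z)\in\mathrm{Dom}\,\mathcal{G}$ enters only to guarantee that $F(t,X)$, $F(t,X+Z)$ are well-defined and $R(t,X)$, $R(t,X+Z)$ are invertible, so that Propositions \ref{function_g:proposition_1} and \ref{function_g:proposition_2} apply; the rest is bookkeeping.
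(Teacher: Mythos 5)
Your proposal is correct and follows essentially the same route as the paper's own proof: the paper likewise expands $\mathcal{G}(t,X+Z)$ via Proposition \ref{function_g:proposition_2} with the frozen gain $\varTheta = F(t,X)$ (regrouping the $Z$-free terms into $\mathcal{G}(t,X)$, which is exactly your ``apply the representation at $X$ with the same $\varTheta$ and subtract''), and then invokes Proposition \ref{function_g:proposition_1} to rewrite the remainder $-\left[F(t,X+Z)-F(t,X)\right]^{\top}R(t,X+Z)\left[F(t,X+Z)-F(t,X)\right]$ as $-N^{\top}(t,X,Z)R(t,X+Z)^{-1}N(t,X,Z)$ using the symmetry of $R(t,X+Z)$. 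Your identification of the frozen-gain choice as the crux matches the paper's argument exactly, so there is nothing to correct.
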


\begin{proof}
From the definition of $\mathcal{G}$, we have:
\begin{align*}
    \mathcal{G}(t, X + Z) =&  A^{\top}_0(t) (X+Z) + (X+Z) A_0(t) + \sum_{k=1}^{r} A^{\top}_k(t) (X+Z) A_k(t) + M(t) \\
    &- S^{\top}(t, X + Z) R(t, X + Z)^{-1} S(t, X + Z).
\end{align*}
where $S(t, X)=B_{0}^{\top}(t)X + \sum_{k=1}^{r}B_{k}^{\top}(t)XA_k(t)+ L^{\top}(t)$ and $R(t, X )=R(t) + \sum_{k=1}^{r}B_{k}^{\top}(t)XB_{k}(t)$.
From Proposition \ref{function_g:proposition_2}, expanding and rearranging terms, we obtain:
\begin{align*}
   \mathcal{G}(t, X + Z)=& A_0(t)^{\top} X + X A_0(t) + \sum_{k=1}^{r} A_k(t)^{\top} X A_k(t) + M(t) + F^{\top}(t, X )R(t, X)F(t, X )\\
   &F^{\top}(t, X )S(t, X)+S^{\top}(t, X)F(t, X )+(A_0(t)+B_0F(t, X))^{\top} Z\\
   &+ Z (A_0(t)+B_0F(t, X )) + \sum_{k=1}^{r} (A_k(t)+B_kF(t, X ))^{\top} Z (A_k(t)+B_kF(t, X ))\\
   &- \left[ F(t, X + Z)- F(t, X) \right]^{\top} R(t, X + Z) \left[ F(t, X + Z)- F(t, X) \right]
\end{align*}
From Proposition \eqref{function_g:proposition_1}, we derive:
\begin{align*}
\left[ F(t, X + Z)- F(t, X) \right]^{\top} R(t, X + Z) \left[ F(t, X + Z)- F(t, X) \right]=N^{\top}(t,X,Z) R(t,X+Z)^{-1} N(t,X,Z).
\end{align*}
Thus,
\begin{align*}
&\mathcal{G}(t,X+Z) = \mathcal{G}(t,X)+ \left( A_0(t)+B_{0}(t)F(t,X) \right)^{\top}Z+Z \left( A_0(t)+ B_{0}(t)F(t,X) \right)\\
&+\sum_{k=1}^{r}\left( A_k(t)+ B_{k}(t)F(t,X) \right)^{\top}Z\left(A_k(t)+ B_{k}(t)F(t,X) \right)- N(t,X,Z) R(t,X+Z)^{-1} N^{\top}(t,X,Z).
\end{align*}
\end{proof}

\begin{lemma}
\label{pr:lemma}    
Assume that $R_{22}(t) \succ 0$ for all $t \in \mathbb{R}_+$ and $\mathcal{A}^{\Sigma}$ is not empty. Then we have  
\begin{itemize}
    \item[(i.)] $(t, 0) \in \text{Dom}\,\mathcal{G}$ for all $t \in \mathbb{R}_+$. 
    \item[(ii.)] $(t, \tilde{Y}_{KW}(t)) \in \text{Dom}\,\mathcal{G}$ for all $t \in \mathbb{R}_+$, where $\tilde{Y}_{KW}(\cdot)$ be the $\theta$-periodic and stabilizing solution to the corresponding Riccati differential equation \eqref{pr:sgrde_kw}.
\end{itemize}
\end{lemma}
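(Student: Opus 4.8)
The plan is to express both defining inequalities of $\mathrm{Dom}\,\mathcal{G}$, namely $\mathbb{R}_{22}(t,X)\succ 0$ and $\mathbb{R}_{22}^{\sharp}(t,X)\prec 0$, through the single symmetric matrix $R(t,X)=R(t)+\sum_{k=1}^{r}B_k^{\top}(t)XB_k(t)$, partitioned conformably with $(m_1,m_2)$ as $R(t,X)=\left(\begin{smallmatrix}\mathbb{R}_{11}(t,X)&\mathbb{R}_{12}(t,X)\\ \mathbb{R}_{12}^{\top}(t,X)&\mathbb{R}_{22}(t,X)\end{smallmatrix}\right)$, and then to evaluate at $X=0$ and $X=\tilde{Y}_{KW}(t)$. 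The backbone is the identity
\[
R_W(t)+\sum_{k=1}^{r}B_{kW}^{\top}(t)\,Y\,B_{kW}(t)=\begin{pmatrix}I_{m_1}\\ W(t)\end{pmatrix}^{\top}R(t,Y)\begin{pmatrix}I_{m_1}\\ W(t)\end{pmatrix},\qquad Y\in\mathbb{S}^n,
\]
which I would verify directly from $B_{kW}=B_{k1}+B_{k2}W=B_k\left(\begin{smallmatrix}I_{m_1}\\ W\end{smallmatrix}\right)$ and the definition of $R_W$. Whenever $\mathbb{R}_{22}(t,Y)\succ 0$, completing the square in $W$ rewrites the right-hand side as $\mathbb{R}_{22}^{\sharp}(t,Y)+\big(W+\mathbb{R}_{22}^{-1}\mathbb{R}_{12}^{\top}\big)^{\top}\mathbb{R}_{22}\big(W+\mathbb{R}_{22}^{-1}\mathbb{R}_{12}^{\top}\big)$, so the left-hand side always dominates $\mathbb{R}_{22}^{\sharp}(t,Y)$ in the Loewner order.

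The single fact driving both parts is $\tilde{Y}_{KW}(t)\succeq 0$ for all $t\in\mathbb{R}_+$. Granting it, part (ii) is quick: $\mathbb{R}_{22}(t,\tilde{Y}_{KW})=R_{22}(t)+\sum_{k}B_{k2}^{\top}\tilde{Y}_{KW}B_{k2}\succeq R_{22}(t)\succ 0$ settles the first condition, and the identity at $Y=\tilde{Y}_{KW}$ then forces $\mathbb{R}_{22}^{\sharp}(t,\tilde{Y}_{KW})\preceq R_W(t)+\sum_k B_{kW}^{\top}\tilde{Y}_{KW}B_{kW}\prec 0$, where the strict inequality is precisely the sign condition guaranteed by $(K,W)\in\mathcal{A}^{\Sigma}$. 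For part (i), $\mathbb{R}_{22}(t,0)=R_{22}(t)\succ 0$ is the hypothesis, while $\tilde{Y}_{KW}\succeq 0$ gives $\sum_k B_{kW}^{\top}\tilde{Y}_{KW}B_{kW}\succeq 0$ and hence $R_W(t)\preceq R_W(t)+\sum_k B_{kW}^{\top}\tilde{Y}_{KW}B_{kW}\prec 0$; applying the identity once more at $Y=0$ yields $\mathbb{R}_{22}^{\sharp}(t,0)\preceq R_W(t)\prec 0$.

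The hard part is therefore $\tilde{Y}_{KW}(t)\succeq 0$, the only place where genuine LQ/game structure rather than linear algebra is used. I would obtain it from the representation of the stabilizing solution: since $(K,W)\in\mathcal{A}^{\Sigma}$ renders the closed loop $(A_{0K},\dots,A_{rK})$ exponentially stable in mean square, completing the square in \eqref{pr:sgrde_kw} (in the spirit of Proposition \ref{function_g:proposition_2}) exhibits $\tilde{Y}_{KW}$ as the accumulated closed-loop cost of the stable system, which is positive semidefinite as soon as the closed-loop weighting matrix is, i.e.\ under the standing state-cost assumptions inherited from Assumptions~1 of \cite{Dragan2020} that make $M_K(t)\succeq 0$. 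An equivalent shortcut for part (i), once $\tilde{Y}_{KW}\succeq 0$ is available, is to use that the Schur-complement map $R(t,\cdot)\mapsto\mathbb{R}_{22}^{\sharp}$ is monotone in the Loewner order (being a pointwise infimum over $W$ of affine maps), which transports $\mathbb{R}_{22}^{\sharp}(t,\tilde{Y}_{KW})\prec 0$ down to $\mathbb{R}_{22}^{\sharp}(t,0)\prec 0$; I would nonetheless keep the self-contained identity argument as primary.
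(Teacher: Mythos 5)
Your proposal is correct, and on part (ii) it coincides in substance with the paper's own proof: the paper's congruence of the partitioned matrix $R(t,\tilde Y_{KW}(t))$ by $\left(\begin{smallmatrix} I_{m_1} & 0\\ W(t) & I_{m_2}\end{smallmatrix}\right)$, followed by an appeal to Lemma 6.2 of \cite{Dragan1994}, is exactly your completing-the-square identity --- the $(1,1)$ block of that congruence is $R_W(t)+\sum_{k} B_{kW}^{\top}(t)\tilde Y_{KW}(t)B_{kW}(t)\prec 0$, while the Schur complement of its $(2,2)$ block is $\mathbb{R}_{22}^{\sharp}(t,\tilde Y_{KW}(t))$ (the paper's display conflates these two objects, a harmless misprint) --- and both routes use $\tilde Y_{KW}(t)\succeq 0$ to obtain $\mathbb{R}_{22}(t,\tilde Y_{KW}(t))\succeq R_{22}(t)\succ 0$. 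Where you genuinely differ is part (i): the paper dispatches it in one line by citing Part (b) of Remark 3 of \cite{Dragan2020}, whereas you rederive it self-containedly from the same identity at $Y=0$ together with $R_W(t)\preceq R_W(t)+\sum_k B_{kW}^{\top}(t)\tilde Y_{KW}(t)B_{kW}(t)\prec 0$; this buys a unified, citation-free proof of both parts from the single inequality $\mathbb{R}_{22}^{\sharp}(t,Y)\preceq R_W(t)+\sum_k B_{kW}^{\top}(t)YB_{kW}(t)$ (valid whenever $\mathbb{R}_{22}(t,Y)\succ 0$), at no real cost since nonemptiness of $\mathcal{A}^{\Sigma}$ is a hypothesis anyway. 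You are also right that $\tilde Y_{KW}(t)\succeq 0$ is the only substantive input: the paper merely asserts it ``due to the stabilizing solution property,'' so flagging it is fair, and your sketch closes cleanly --- take $\varTheta=0$ in Proposition \ref{function_g:proposition_2} applied to \eqref{pr:sgrde_kw}, so that $\tilde Y_{KW}$ solves a Lyapunov-type equation along the stable system $(A_{0K}(\cdot),\dots,A_{rK}(\cdot))$ (stability being the first defining property of $\mathcal{A}^{\Sigma}$) with forcing term $M_K(t)-\tilde F_{KW}^{\top}(t)\bigl[R_W(t)+\sum_k B_{kW}^{\top}(t)\tilde Y_{KW}(t)B_{kW}(t)\bigr]\tilde F_{KW}(t)\succeq 0$, the quadratic part being $\succeq 0$ by the sign condition and $M_K(t)\succeq 0$ holding under the assumptions imported from \cite{Dragan2020} (note the paper's formula $M_K=M+K^{\top}R_{12}K$ is dimensionally impossible and must read $M-LR^{-1}L^{\top}+K^{\top}R_{22}K$); positivity of the bounded $\theta$-periodic solution then follows as in Lemma \ref{main_results:stochastically_detectable_lemma}. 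One terminological caution: your ``pointwise infimum over $W$ of affine maps'' should be read vector-wise, $\xi^{\top}\mathbb{R}_{22}^{\sharp}(t,Y)\xi=\inf_{\eta}\left\langle R(t,Y)\binom{\xi}{\eta},\binom{\xi}{\eta}\right\rangle$, since matrix-valued infima need not exist; so understood, your monotonicity shortcut is precisely Corollary 4.5 of \cite{freiling2003}, which the paper itself invokes elsewhere.
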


\begin{proof}
(i.) The conclusion is directly derived from Part (b) of Remark 3 of \cite{Dragan2020}.

(ii.) Since $R_{22}(t) \succ 0$ and $\tilde{Y}_{KW}(t) \succeq 0$ (due to the stabilizing solution property) for all $t \in \mathbb{R}_+$, we obtain $\mathbb{R}_{22}(t,\tilde{Y}_{KW}(t) ) \succ 0$ for all $t \in \mathbb{R}_+$.
Define $R_{ij}(t,\tilde{Y}_{KW}(t))=R_{ij}(t)+ \sum_{k=1}^{r} D^{\top}_{ki}(t)\tilde{Y}_{KW}(t)D_{kj}(t)(i,j=1,2)$ and
\[
\begin{bmatrix} U_{11}(t) & U_{12}(t)  \\ U_{21}(t)  & U_{22}(t)  \end{bmatrix} = \begin{bmatrix} I_{m_1} & W^{\top}(t)  \\ 0 & I_{m_2} \end{bmatrix} \begin{bmatrix} R_{11}(t,\tilde{Y}_{KW}(t)) & R_{12}(t,\tilde{Y}_{KW}(t)) \\ R_{21}(t,\tilde{Y}_{KW}(t)) & R_{22}(t,\tilde{Y}_{KW}(t)) \end{bmatrix}  \begin{bmatrix} I_{m_1} & 0 \\ W(t) & I_{m_2} \end{bmatrix}.
\]
Then
\[
U_{22}(t) \succ 0, U_{11}(t)  - U_{12} ^{\top}(t)  U_{22}(t) ^{-1}U_{21}(t)  = R_W(t)  + \sum_{l=1}^r D_{lW}^{\top}(t) \tilde{Y}_{KW}(t) D_{lW}(t)  \prec 0, \forall \, t \in \mathbb{R}_+ .
\]
By Lemma 6.2 in \cite{Dragan1994}, this implies
\[
R_{11}(t,\tilde{Y}_{KW}(t))-R^{\top}_{12}(t,\tilde{Y}_{KW}(t))R_{22}(t,\tilde{Y}_{KW}(t))^{-1}R_{21}(t,\tilde{Y}_{KW}(t)) \prec 0, \forall \, t \in \mathbb{R}_+ .
\]
Thus $(t, \tilde{Y}_{KW}(t)) \in \text{Dom}\,\mathcal{G} $ for all $t \in \mathbb{R}_+$.
\end{proof}

\subsection{Discriminant condition for stabilizing solutions of Riccati differential equations with definite-sign quadratic term}

From the structure of $\mathcal{G}$ and Proposition \ref{function_g:proposition_3}, a sequence of interrelated sub Riccati differential equations with definite-sign quadratic term can be constructed, as the algorithm described in the \secref{sec:algorithm_design}. A key requirement here is the existence and uniqueness of stabilizing solutions to these differential equations, for which we present a criterion in what follows.

\begin{definition}[\cite{Dragan2013book}]
The system \eqref{lqzsdg:sde} denoted as 
\[
\left[ A_0(\cdot),\dotsb,A_r(\cdot); B_{0}(\cdot), \dotsb,  B_{r}(\cdot) \right],
\]
is called \textit{stochastically stabilizable} if there exists a continuous function $\varTheta(\cdot) : \mathbb{R}_+ \to \mathbb{R}^{m \times n}$ such that the system $\left( A_{0}(\cdot)+B_{0}(\cdot)\varTheta(\cdot),\dotsb, A_{r}(\cdot)+B_{r}(\cdot)\varTheta(\cdot) \right)$ is stable.
\end{definition}

\begin{definition}
Consider the following stochastic observation system:
\begin{equation}
\label{stochastic_observation_system}
    \begin{cases}
         dx(t) = A_0(t)x(t)dt + \sum_{k=1}^{r}A_k(t)x(t)dw_k(t) \\
         dy(t) = C_0(t)x(t)dt + \sum_{k=1}^{r}C_k(t)x(t)dw_k(t)
    \end{cases}
\end{equation}
where $t \mapsto C_k(t) : \mathbb{R}_+ \to \mathbb{R}^{q \times n}(k=0,\dots,r)$.
We denote this system by $\left[ C_{0}(\cdot), \dots, C_{r}(\cdot); A_{0}(\cdot), \dots, A_{r}(\cdot) \right]$. 
If there exists continuous function $\varTheta(\cdot) : \mathbb{R}_+ \to \mathbb{R}^{n \times p}$ such that the system $(A_{0}(\cdot)+\varTheta(\cdot) C_{0}(\cdot),\dotsb, A_{r}(\cdot)+\varTheta(\cdot)C_{r}(\cdot))$ is stable, the system $\left[ C_{0}(\cdot), \dotsb, C_{r}(\cdot); A_{0}(\cdot), \dotsb, A_{r}(\cdot) \right]$ is called \textit{stochastically detectable}.
\end{definition}

\begin{remark}
For more details regarding stochastic detectability for systems described by It\^o differential equations, we refer to Chapter 4 in \cite{Dragan2013book}. 
\end{remark}

\begin{lemma}
\label{main_results:stochastically_detectable_lemma}
If the system $[C_{0}(\cdot), \dotsb, C_{r}(\cdot); A_{0}(\cdot), \dotsb, A_{r}(\cdot)]$ is stochastically detectable, the following are equivalent:
    \begin{itemize}
        \item[a.]  Then the system $(A_{0}(\cdot), \dotsb, A_{r}(\cdot))$ is stable.
        \item[b.]  $\frac{d}{dt}X(t) + A_0(t)^{\top}X(t)+ X(t)A_0(t) + \sum_{k=1}^r A_k(t)^{\top} X(t)A_k(t)+\sum_{k=0}^{r}C_{k}^{\top}(t)C_{k}(t)=0,t \ge 0$ has a solution $X(t)$ with $X(t) \in \overline{\mathbb{S}}_{+}^n$ for all $t \in \mathbb{R}_+$.
    \end{itemize}
\end{lemma}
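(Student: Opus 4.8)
The plan is to prove the two implications separately, treating (a) $\Rightarrow$ (b) as the constructive direction and (b) $\Rightarrow$ (a) as the substantive one in which stochastic detectability is actually used. Throughout I would write $\mathcal{L}_t[X] = A_0^{\top}(t)X + XA_0(t) + \sum_{k=1}^r A_k^{\top}(t)XA_k(t)$ for the Lyapunov operator attached to the system and set $Q(t) = \sum_{k=0}^r C_k^{\top}(t)C_k(t) \succeq 0$, so that the equation in (b) reads $\tfrac{d}{dt}X(t) + \mathcal{L}_t[X(t)] + Q(t) = 0$. Since all coefficients are continuous and $\theta$-periodic, stability of $(A_0(\cdot),\dots,A_r(\cdot))$ is understood as exponential stability in mean square (ESMS).

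For (a) $\Rightarrow$ (b): assuming the system is ESMS, I would build the solution explicitly from the positive evolution operator $\mathcal{T}(\cdot,\cdot)$ on $\mathbb{S}^n$ generated by $\mathcal{L}$, writing $X(t)$ as the integral of $Q(\cdot)$ transported by this evolution over $[t,\infty)$. Exponential decay of the mean-square transition guarantees absolute convergence of the integral and produces a bounded, $\theta$-periodic solution of the Lyapunov equation; nonnegativity $X(t)\in\overline{\mathbb{S}}_{+}^n$ is inherited from $Q(s)\succeq 0$ because the evolution operator leaves the cone $\overline{\mathbb{S}}_{+}^n$ invariant. This is the standard Lyapunov-existence statement under stability, which I would quote from \cite{Dragan2013book}.

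For (b) $\Rightarrow$ (a): given a global solution $X(\cdot)$ with $X(t)\succeq 0$, I would apply the It\^o--Dynkin formula to $V(t,x)=x^{\top}X(t)x$ along trajectories of \eqref{stochastic_observation_system}, which yields $\tfrac{d}{dt}\mathbb{E}[x^{\top}Xx] = \mathbb{E}\big[x^{\top}(\dot X + \mathcal{L}_t[X])x\big] = -\mathbb{E}[x^{\top}Qx]$. Integrating on $[0,T]$ and discarding the nonnegative boundary term $\mathbb{E}[x(T)^{\top}X(T)x(T)]\geq 0$ gives the uniform energy bound $\int_0^{\infty}\sum_{k=0}^r \mathbb{E}\,|C_k(t)x(t)|^2\,dt \leq x_0^{\top}X(0)x_0 < \infty$ for every $x_0$. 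I would then invoke detectability: pick $\varTheta(\cdot)$ so that $(A_0+\varTheta C_0,\dots,A_r+\varTheta C_r)$ is stable and rewrite $A_k=(A_k+\varTheta C_k)-\varTheta C_k$, treating $-\varTheta(t)C_k(t)x(t)$ as a forcing term of the stabilized system. Since $\varTheta(\cdot)$ is bounded by periodicity and the outputs $C_k x$ are square-integrable, the forcing lies in $L^2$; a mean-square input-to-state estimate for the stable system then forces $\int_0^{\infty}\mathbb{E}\,|x(t)|^2\,dt<\infty$ for all $x_0$, and a Datko-type characterization of ESMS for periodic stochastic systems upgrades this square-integrability to exponential stability, establishing (a).

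The main obstacle is the final step of (b) $\Rightarrow$ (a): converting the output energy bound into mean-square exponential stability of the state. This conversion is not algebraic and rests on two nontrivial facts about the mean-square evolution semigroup, namely that a stable stochastic system driven by $L^2$ inputs has an $L^2$ state response, and that uniform square-integrability of trajectories over all initial conditions is equivalent to ESMS in the periodic time-varying setting. Rather than reproving these, I would cite the corresponding characterizations in Chapter 4 of \cite{Dragan2013book}, keeping the argument self-contained modulo those standard results.
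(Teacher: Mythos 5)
Your proposal is correct, and the comparison with the paper is mostly a comparison against citations: the paper's own ``proof'' of this lemma contains essentially no argument, invoking Theorem 2.7.5 of \cite{Dragan2013book} for (a) $\Rightarrow$ (b) and Remark 4.1.5 together with Theorem 4.1.7 of the same book for (b) $\Rightarrow$ (a). Your (a) $\Rightarrow$ (b) — constructing $X(t)$ by transporting $Q(s)=\sum_{k=0}^r C_k^{\top}(s)C_k(s)$ with the positive evolution operator over $[t,\infty)$ — is exactly the content of the cited Theorem 2.7.5, and your (b) $\Rightarrow$ (a) — the dissipation identity $\tfrac{d}{dt}\mathbb{E}[x^{\top}Xx]=-\mathbb{E}[x^{\top}Qx]$, the output-energy bound $\int_0^\infty \sum_k \mathbb{E}|C_k(t)x(t)|^2\,dt \le x_0^{\top}X(0)x_0$, the output-injection rewriting $A_k=(A_k+\varTheta C_k)-\varTheta C_k$, and the $L^2$-input-to-$L^2$-state estimate followed by a Datko-type upgrade — is the standard proof underlying the cited detectability theorem. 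What your version buys is transparency: it shows precisely where detectability and the positivity $X(T)\succeq 0$ enter (only the discarded boundary term uses positivity), while the paper's version is shorter but opaque. Two points you should tighten. First, the paper's definition of stochastic detectability only requires $\varTheta(\cdot)$ continuous, so your parenthetical ``bounded by periodicity'' presupposes the injector can be chosen $\theta$-periodic; this is true in the periodic setting and is implicitly assumed by the paper itself (e.g.\ in the proof of Proposition~\ref{main_results:stabilizable_detectable}), but it is an assumption on top of the stated definition. Second, the Datko step needs the square-integrability bound to be uniform over initial times, and statement (b) does not assert that $X(\cdot)$ is bounded; you can close this by observing that $\theta$-periodicity of the coefficients makes $x(\cdot;t_0+k\theta,x_0)$ equal in law to $x(\cdot-k\theta;t_0,x_0)$, so only $\sup_{t\in[0,\theta]}\norm{X(t)}$ — finite by continuity — is ever needed. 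With those two remarks added, your argument is a complete and self-contained (modulo the two cited semigroup facts) replacement for the paper's citation-only proof.
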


\begin{proof}
$a. \implies b.$
Since $\sum_{k=0}^{r}C_{k}^{\top}(t)C_{k}(t) \succeq 0$ for all $t \in \mathbb{R}_+$ and the system $(A_{0}(\cdot), \dotsb, A_{r}(\cdot))$ is stable, it follows from Theorem 2.7.5. in \cite{Dragan2013book} that the differential equation
$\frac{d}{dt}X(t) + A_0(t)^{\top}X(t)+ X(t)A_0(t) + \sum_{k=1}^r A_k(t)^{\top} X(t)A_k(t)+\sum_{k=0}^{r}C_{k}^{\top}(t)C_{k}(t)=0,t \ge 0$
admits a bounded solution $X(t) \in \overline{\mathbb{S}}_{+}^n$ for all $t \in \mathbb{R}_+$.

$b. \implies a.$ is a direct application of Remark 4.1.5 in \cite{Dragan2013book}. For a detailed proof, refer to Theorem 4.1.7 and Remark 4.1.5 in the aforementioned book.
\end{proof}

Define $\mathbf{C}_b^1(\mathbb{R}_+,\mathbb{S}^n) = \left\{ X(\cdot) \in \mathbf{C}^1(\mathbb{R}_+, \mathbb{S}^n) \mid X(\cdot), \frac{d}{dt}X(\cdot) \,\text{are bounded} \right\}$ and the operator $\boldsymbol{\Lambda}(t,X): \mathbb{R}_+ \times \mathbb{S}^n \rightarrow \mathbb{S}^{n+m}$ related to Riccati differential equations \eqref{pf:gtrde} as : 
\begin{equation}
\label{operator_lambda}
\begin{aligned}
    &\begin{bmatrix} \frac{d}{dt}X(t) + A_0(t)^{\top}X+ XA_0(t) + \sum_{k=1}^r A_k(t)^{\top} XA_k(t) +M(t) & XB_0(t) + \sum_{k=1}^r A_k^{\top}(t)XB_k(t) + L(t) \\ B^{\top}_0(t)X + \sum_{k=1}^r B^{\top}_k(t)XA_k(t) + L^{\top}(t) & R(t)+ \sum_{k=1}^{r}B_{k}^{\top}(t)XB_{k}(t) \end{bmatrix}.  
\end{aligned}
\end{equation}
Define the set $\boldsymbol{\Gamma}^{\Sigma}$ related to Riccati differential equations \eqref{pf:gtrde}:
\begin{equation}
\label{set_gamma}
    \boldsymbol{\Gamma}^{\Sigma}=\left\{ X(\cdot) \in \mathbf{C}_b^1(\mathbb{R}_{+}, \mathbb{S}^n) \mid \boldsymbol{\Lambda}(t,X_t) \succeq 0,\, R(t)+ \sum_{k=1}^{r}B_{k}^{\top}(t)X(t)B_{k}(t) \succ 0 \,\, \text{for all} \,\, t \in \mathbb{R}_{+}\right\}.
\end{equation}

\begin{proposition}
\label{main_results:stabilizable_detectable}
When the parameters associated with the Riccati differential equations \eqref{pf:gtrde} satisfy the following conditions:
\begin{itemize}
    \item $R(t) \succ 0$ for all $t \in \mathbb{R}_{+}$;
    \item The system $[ A_{0,(0)}(\cdot), \dotsb, A_{r,(0)}(\cdot); B_{0}(\cdot), \dotsb, B_{r}(\cdot)]$ is \textit{stochastically stabilizable};
    \item There exists a continuous matrix valued functions set $\{C_0(\cdot),  \dots, C_r(\cdot)\}$ satisfying $\sum_{k=0}^{r}C_{k}^{\top}(t)C_{k}(t) = M(t)-L(t)R(t)L^{\top}(t) $ for all $t \in \mathbb{R}_{+}$ and the system $[C_{0}(\cdot), \dotsb, C_{r}(\cdot); A_{0,(0)}(\cdot), \dotsb, A_{r,(0)}(\cdot)]$ is \textit{stochastically detectable}.
\end{itemize}
Then the Riccati differential equations \eqref{pf:gtrde} has a unique stabilizing solution $X(\cdot)$  such that \[ X(t) \in \overline{\mathbb{S}}_{+}^n \,\, \text{and} \,\, R(t)+ \sum_{k=1}^{r}B_{k}^{\top}(t)X(t)B_{k}(t) \succ 0 \,\, \text{for all} \,\, t \in \mathbb{R}_+ .\] 
\end{proposition}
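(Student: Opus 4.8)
The plan is to exploit the hypothesis $R(t)\succ0$ to recast \eqref{pf:gtrde} as a \emph{control-type} stochastic Riccati differential equation with a genuinely definite-sign quadratic term, and then to produce the stabilizing solution by a monotone Newton--Kleinman iteration that is anchored by stochastic stabilizability, using stochastic detectability at the end to upgrade the resulting bounded nonnegative solution to a \emph{stabilizing} one. The organizing observation is that on the relevant cone we automatically have definiteness: whenever $X(t)\succeq0$, the matrix $R(t,X_t)=R(t)+\sum_{k=1}^{r}B_k^{\top}(t)X(t)B_k(t)\succeq R(t)\succ0$, and by continuity together with $\theta$-periodicity of the data this lower bound is \emph{uniform} on $\mathbb{R}_+$. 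Hence $F(t,X_t)$ is well-defined throughout the construction, the quadratic term is always a true (minimization) Schur complement, and condition (i) of the stabilizing-solution definition (uniform invertibility of $R(t,X_t)$) is obtained for free once a bounded nonnegative solution has been produced.

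First I would carry out a completion-of-squares reduction that removes the cross term $L(t)$. Applying the algebraic identity of Proposition~\ref{function_g:proposition_2} with the choice $\varTheta(t)=F(t,0)=-R(t)^{-1}L^{\top}(t)$ and simplifying $F(t,0)^{\top}R(t)F(t,0)+F(t,0)^{\top}L^{\top}(t)+L(t)F(t,0)=-L(t)R(t)^{-1}L^{\top}(t)$ yields
\[
\mathcal{G}(t,X)=A_{0,(0)}^{\top}(t)X+XA_{0,(0)}(t)+\sum_{k=1}^{r}A_{k,(0)}^{\top}(t)XA_{k,(0)}(t)+\big(M(t)-L(t)R(t)^{-1}L^{\top}(t)\big)-\big(F(t,X)-F(t,0)\big)^{\top}R(t,X)\big(F(t,X)-F(t,0)\big),
\]
with $A_{k,(0)}(t)=A_k(t)+B_k(t)F(t,0)$. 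This presents \eqref{pf:gtrde} as a standard control Riccati equation driven by $[A_{0,(0)},\dots,A_{r,(0)};B_0,\dots,B_r]$ with nonnegative state weight $M-LR^{-1}L^{\top}=\sum_{k=0}^{r}C_k^{\top}C_k$, i.e.\ exactly the factorized quantity supplied by the third hypothesis and attached to the observation system $[C_0,\dots,C_r;A_{0,(0)},\dots,A_{r,(0)}]$.

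Next I would establish existence and nonnegativity simultaneously. Stochastic stabilizability furnishes a continuous $\theta$-periodic feedback $\varTheta_0$ making $(A_{0,(0)}+B_0\varTheta_0,\dots,A_{r,(0)}+B_r\varTheta_0)$ stable; seeding a Newton--Kleinman recursion at $\varTheta_0$ produces at each step a linear Lyapunov-type differential equation whose forcing is the sum of $\sum_{k}C_k^{\top}C_k\succeq0$ and a nonnegative quadratic in the feedback residual. By the implication $a.\Rightarrow b.$ of Lemma~\ref{main_results:stochastically_detectable_lemma} (equivalently Theorem~2.7.5 of \cite{Dragan2013book}) each such equation has a bounded solution in $\overline{\mathbb{S}}_{+}^n$, and the usual monotonicity estimate shows the iterates form a nonincreasing sequence bounded below by $0$. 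The limit $X(\cdot)$ is then a bounded $\theta$-periodic solution with $X(t)\in\overline{\mathbb{S}}_{+}^n$, and the uniform bound on $R(t,X_t)$ noted above secures condition (i).

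Finally I would promote $X(\cdot)$ to a stabilizing solution and prove uniqueness. Rewriting the Riccati identity at $X$ via Proposition~\ref{function_g:proposition_2} with $\varTheta=F(t,X_t)$ gives a closed-loop Lyapunov equation for the generator $A_{k,(0)}+B_k(F(t,X_t)-F(t,0))$ with nonnegative forcing $\sum_{k}C_k^{\top}C_k+(F(t,X_t)-F(t,0))^{\top}R(t,X_t)(F(t,X_t)-F(t,0))$; combining the detectability of $[C_0,\dots,C_r;A_{0,(0)},\dots,A_{r,(0)}]$ with this identity (a stochastic analogue of the PBH/spectral argument, formalized through the implication $b.\Rightarrow a.$ of Lemma~\ref{main_results:stochastically_detectable_lemma}) forces the closed-loop system to be stable, so $X(\cdot)$ is stabilizing. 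Uniqueness then follows routinely: the difference of two stabilizing solutions satisfies a homogeneous linear Lyapunov differential equation over an exponentially stable closed loop and must therefore vanish. I expect the genuine obstacle to be precisely this last stabilizing step — transferring the detectability of the open-loop observation pair through the Riccati feedback so that the merely \emph{bounded nonnegative} Lyapunov solution is forced to be \emph{stabilizing}, a transfer that requires care because the control-dependent diffusion terms $B_k$ ($k\geq1$) prevent a naive inheritance of detectability by the closed-loop observation system; the reduction and the uniqueness argument are comparatively mechanical.
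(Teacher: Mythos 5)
Your overall architecture coincides with the paper's: the same completion of squares at $\varTheta=F(t,0)=-R(t)^{-1}L^{\top}(t)$ recasting \eqref{pf:gtrde} as a definite-sign control Riccati equation with drift $A_{k,(0)}$ and state weight $M-LR^{-1}L^{\top}=\sum_{k=0}^{r}C_k^{\top}C_k$; a bounded nonnegative solution from stochastic stabilizability (the paper obtains the maximal solution $X^{\mathrm{max}}$ directly from Theorem~4.7 of \cite{Drgan2004} together with $0\in\boldsymbol{\Gamma}^{\Sigma}$, rather than running a Newton--Kleinman recursion, but these routes are essentially interchangeable); and uniqueness from Theorem~5.6.5 and Corollary~5.6.7 of \cite{Dragan2013book}. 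The genuine gap is the step you yourself flag at the end as ``the genuine obstacle'': the transfer of detectability from the open-loop pair $[C_0,\dots,C_r;A_{0,(0)},\dots,A_{r,(0)}]$ to the closed loop. Asserting that the implication $b.\Rightarrow a.$ of Lemma~\ref{main_results:stochastically_detectable_lemma} ``forces'' closed-loop stability is not an argument, because that lemma requires stochastic detectability of the \emph{closed-loop} system paired with the \emph{augmented} output appearing in the Lyapunov forcing --- which is not what the hypothesis hands you. This transfer is precisely what the paper's proof actually consists of, and your proposal stops short of it.

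The missing construction is explicit in the paper. Write the equation satisfied by $X^{\mathrm{max}}$ in closed-loop form with forcing $\sum_{k=0}^{r}\hat{C}_k^{\top}(t)\hat{C}_k(t)$, where each $\hat{C}_k$ stacks $C_k(t)$ with the residual channel $\sqrt{\tfrac{1}{r+1}R(t)}\,\bigl(F(t,X^{\mathrm{max}}_t)-F(t,0)\bigr)$ (note a bookkeeping slip in your sketch: with closed-loop drift $A_k+B_kF(t,X_t)$ the residual weight is $R(t)$, not $R(t,X_t)$; the $R(t,X_t)$ weight belongs to the form with drift $A_{k,(0)}$ --- both are nonnegative, so nothing structural changes). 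Then take the injection
\[
\hat{\varTheta}(t)=\begin{pmatrix}\varTheta(t)^{\times (r+1)} & -B_0(t)\sqrt{(r+1)R(t)^{-1}} & \cdots & -B_r(t)\sqrt{(r+1)R(t)^{-1}}\end{pmatrix},
\]
whose $-B_k\sqrt{(r+1)R^{-1}}$ blocks, acting on the residual channel, \emph{exactly cancel} the feedback perturbation $B_k\bigl(F(t,X^{\mathrm{max}}_t)-F(t,0)\bigr)$ in every coefficient, diffusion terms included, so that $A_k+B_kF(\cdot,X^{\mathrm{max}})+\hat{\varTheta}\hat{C}_k=A_{k,(0)}+\varTheta C_k$, which is stable by the detectability hypothesis; Lemma~\ref{main_results:stochastically_detectable_lemma} then upgrades $X^{\mathrm{max}}$ to a stabilizing solution. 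This dissolves the worry you raise about the control-dependent diffusions $B_k$, $k\geq1$: the cancellation works because the residual output is weighted by the uniformly invertible $\sqrt{R(t)}$ and the injection acts simultaneously on all $r+1$ coefficients. Finally, if you insist on Newton--Kleinman for existence rather than citing the maximal-solution theorem, be aware that the same transfer is needed at \emph{every} iterate to keep the updated feedback stabilizing --- a bounded nonnegative solution of a Lyapunov equation with nonnegative forcing does not by itself imply stability --- so the construction above is not an optional finishing touch but the load-bearing step of the whole proof.
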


\begin{proof}
Based on Theorem 4.7 in \cite{Drgan2004}, together with the facts that $0 \in \boldsymbol{\Gamma}^{\Sigma}$ and the system \[[ A_{0,(0)}(\cdot), \dotsb, A_{r,(0)}(\cdot); B_{0}(\cdot), \dotsb, B_{r}(\cdot)]\] is \textit{stochastically stabilizable}, it follows that \eqref{pf:gtrde} admits a maximal solution $X^{\text{max}}(t) \in \overline{\mathbb{S}}_{+}^n$ such that $R(t)+ \sum_{k=1}^{r}B_{k}^{\top}(t)X(t)B_{k}(t) \succ 0$ for all $t \in \mathbb{R}_+$. By using Proposition \ref{function_g:proposition_3}, Riccati differential equations \eqref{pf:gtrde} satisfied by $X^{\text{max}}(\cdot)$ can be transformed into the following form:
\begin{equation*}
    \begin{aligned}
         &\frac{d}{dt}X^{\text{max}}(t)+ (A_{0}(t)+B_0(t)F(t,X^{\text{max}}_t))^{\top} X^{\text{max}}(t)+X^{\text{max}}(t)(A_{0}(t)+B_0(t)F(t,X^{\text{max}}_t)) \\
         &+ \sum_{k=1}^{r}(A_{r}(t)+B_k(t)F(t,X^{\text{max}}_t))^{\top} X^{\text{max}}(t)(A_k(t)+B_k(t)F(t,X^{\text{max}}_t)) +\sum_{k=0}^{r}\hat{C}_{k}^{\top}(t)\hat{C}_{k}(t)=0,t \ge 0,
    \end{aligned}
\end{equation*}
where
\[
\hat{C}_{k}(t)=\begin{pmatrix} \mathbb{O}_{q \times n}^{\times (k)} \\ C_{k}(t) \\\mathbb{O}_{q \times n}^{\times (r-k-1)} \\\mathbb{O}_{m \times n}^{\times (k)} \\\sqrt{\frac{1}{r+1}R(t)}(F(t,X^{\text{max}}_t)-F(t,0)) \\ \mathbb{O}_{m \times n}^{\times (r-k-1)}\end{pmatrix} \in \mathbf{R}^{[(q+m)(r+1)]\times n}, \forall \,t \in \mathbb{R}_+.
\]
Since the system $[C_{0,(0)}(\cdot), \dotsb, C_{r,(0)}(\cdot); A_{0,(0)}(\cdot), \dotsb, A_{r,(0)}(\cdot)]$ is \textit{stochastically detectable}, there exist a $\theta$-periodic continuous function $\varTheta(\cdot) : \mathbb{R}_+ \to \mathbb{R}^{n \times p}$ such that the system $( A_{0,(0)}(\cdot)+\varTheta(\cdot)C_{0}(\cdot), \dotsb, A_{r,(0)}(\cdot)+\varTheta(\cdot)C_{r}(\cdot))$ is stable.

Let 
\[
\hat{\varTheta}(t) = \begin{pmatrix} \varTheta(t)^{\times (r+1)} &-B_0(t)\sqrt{(r+1)R(t)^{-1}} &  \dotsb & -B_{r}(t)\sqrt{(r+1)R(t)^{-1}}\end{pmatrix} \in \mathbf{R}^{n\times [(q+m)(r+1)]}, \forall \,t \in \mathbb{R}_+,
\] 
we derive that the system  
\[
(A_0(\cdot) + B_0(\cdot)F((\cdot),X^{\text{max}}(\cdot))+\hat{\varTheta}(\cdot)\hat{C}_{0}(\cdot),\dotsb,A_r + B_r(\cdot)F((\cdot),X^{\text{max}}(\cdot))+\hat{\varTheta}(\cdot)\hat{C}_{r}(\cdot))
\] 
is stable. By Lemma \ref{main_results:stochastically_detectable_lemma} , it follows that the system 
\[
(A_0(\cdot) + B_0(\cdot)F((\cdot),X^{\text{max}}(\cdot)),\dotsb, A_r(\cdot) + B_r(\cdot)F((\cdot),X^{\text{max}}(\cdot)))
\]
is \textit{stochastically stabilizable}.
So $X^{\text{max}}(\cdot) $ is a stabilizing solution of the Riccati differential equations \eqref{pf:gtrde}. Given that there can be at most one stabilizing solution to the Riccati differential equations \eqref{pf:gtrde} ref to Theorem 5.6.5 and Corollary 5.6.7 of \cite{Dragan2013book}, it follows that the Riccati differential equations \eqref{pf:gtrde} has a unique stabilizing solution $X(\cdot)$  such that \[ X(t) \in \overline{\mathbb{S}}_{+}^n \,\, \text{and} \,\, R(t)+ \sum_{k=1}^{r}B_{k}^{\top}(t)X(t)B_{k}(t) \succ 0 \,\, \text{for all} \,\, t \in \mathbb{R}_+ .\]
\end{proof}

\subsection{Convergence Analysis of Iterative Sequence}

Next, we introduce two linear operators linked to interrelated iterative steps, whose specific properties are analyzed in Lemma \ref{main_results:lemma}. These favorable properties are critical for proving the algorithm's iterative sequence convergence, as explicitly shown in the proof of Theorem \ref{main_results:theorem}.  

For each triple $(K(\cdot), W(\cdot), X(\cdot), Z(\cdot))$ satisfying  $(K(\cdot),W(\cdot)) \in \mathcal{A}^\Sigma$ and $(t,X_t),(t,X_t+Z_t)\in\mathrm{Dom}\,\mathcal{G}$ for all $t \in \mathbb{R}_+$, the operator-valued functions $\mathcal{L}^{*}_{J_{X}}:\mathbb{R}_+ \to \mathbf{B}(\mathbb{S}^n)$ and $\mathcal{L}^{*}_{J_{X+Z}}:\mathbb{R}_+ \to \mathbf{B}(\mathbb{S}^n)$ are defined as follows:
\begin{equation}
\label{linear_operators_xkw}   
\begin{aligned}
    &\mathcal{L}^{*}_{J_{X}}(t)S = \left( A_{0,(0)}(t)+B_0(t)J_{X}(t) \right)^{\top}S + S \left( A_{0,(0)}(t)+B_0(t)J_{X}(t)\right) \\
    &+ \sum_{k=1}^r \left(A_{k,(0)}(t)+B_k(t) J_{X}(t)\right)^{\top} S \left(A_{k,(0)}(t)+B_k(t) J_{X}(t)\right)  \, , \forall \,S \in \mathbb{S}^n,t \in \mathbb{R}_+,
\end{aligned}
\end{equation}
\begin{equation}
\label{linear_operators_xzkw}
\begin{aligned}
    &\mathcal{L}^{*}_{J_{X+Z}}(t)S = \left( A_{0,(0)}(t)+B_0(t)J_{X+Z}(t) \right)^{\top}S + S \left( A_{0,(0)}(t)+B_0(t)J_{X+Z}(t)\right) \\
    &+ \sum_{k=1}^r \left(A_{k,(0)}(t)+B_k(t) J_{X+Z}(t)\right)^{\top} S \left(A_{k,(0)}(t)+B_k(t) J_{X+Z}(t)\right)  \, , \forall \,S \in \mathbb{S}^n,t \in \mathbb{R}_+,
\end{aligned}
\end{equation}
\begin{equation}
\label{linear_operators_auxiliary}
\begin{aligned}
\text{where}\quad
J_{X}(t)=&\begin{bmatrix} \mathbb{I}_{m_1}&\mathbb{O}_{m_2}\\W(t) &\mathbb{O}_{m_2}\end{bmatrix}\hat{F}(t,X_t)+\begin{bmatrix} \mathbb{O}_{m_1}\\K(t)\end{bmatrix}, &J_{X+Z}(t)=&\begin{bmatrix} \mathbb{I}_{m_1}&\mathbb{O}_{m_2}\\W(t) &\mathbb{O}_{m_2}\end{bmatrix}\hat{F}(t,X_t+Z_t)+\begin{bmatrix} \mathbb{O}_{m_1}\\K(t)\end{bmatrix},\text{and}\\
\quad\hat{F}(t,X_t)=&F(t,X_t)-F(t,0),\, \forall t \in \mathbb{R}_+
\end{aligned}    
\end{equation}

\begin{lemma}
\label{main_results:lemma}
Assume that $R_{22}(t) \succ 0$ for all $t \in \mathbb{R}_+$ and there exists $(K(\cdot), W(\cdot)) \in \mathcal{A}^\Sigma$. Let $\tilde{Y}_{KW}(\cdot)$ denote the $\theta$-periodic and stabilizing solution to the corresponding Riccati differential equation \eqref{pr:sgrde_kw}, and let $X(\cdot) ,Z(\cdot)\in \mathbf{C}^1(\mathbb{R}_+,\mathbb{S}^n)$ are $\theta$-periodic and satisfy the following properties:
\begin{itemize}
    \item $(t,X_t),(t,X_t+Z_t) \in \mathrm{Dom}\,\mathcal{G}$ for all $t \in \mathbb{R}_+.$
    \item $(t,X_t)$ and $(t,X_t+Z_t)$ satisfy:
    \begin{equation}
    \label{main_results:g_interrelated_equation}
    \begin{aligned}
        &\frac{d}{dt} (X(t)+ Z(t)) +(A_{0,(0)}(t)+ B_{0}(t)\hat{F}(t,X_t))^{\top}Z(t)+ Z(t)(A_{0,(0)}(t)+ B_{0}(t)\hat{F}(t,X_t))\\
        &+\sum_{k=1}^{r}(A_{k,(0)}(t)+ B_{k}(t)\hat{F}(t,X_t))^{\top}Z(t)(A_{k,(0)}(t)+ B_{k}(t)\hat{F}(t,X_t))+\mathcal{G}(t,X_t)-\\
        &\left[Z(t)B_{02}(t) + \sum_{k=1}^{r}(A_{k,(0)}(t)+ B_{k}(t)\hat{F}(t,X_t))^{\top}Z(t)B_{k2}(t)\right]\left[R_{22}(t) + \sum_{k=1}^{r}B_{k2}^\top(t) (X(t) + Z(t)) B_{k2}(t) \right]^{-1} \\
        &\quad \times \left[B_{02}^{\top}(t)Z(t) + \sum_{k=1}^{r}B_{k2}^{\top}(t)Z(t)(A_{k,(0)}(t)+ B_{k}(t)\hat{F}(t,X_t))\right]=0,t \ge 0,
    \end{aligned}
    \end{equation}
\end{itemize}
then we have:
\item [(i.)] If the system $(A_{0,(0)}(\cdot)+B_{0}(\cdot)J_{X}(\cdot),\dotsb,A_{r,(0)}+B_{r}(\cdot)J_{X}(\cdot))$ is stable, then $\tilde{Y}_{KW}(t)\succeq X(t) + Z(t)$ for all $t \in \mathbb{R}_+$.
\item [(ii.)] If $\tilde{Y}_{KW}(t) \succeq X(t) + Z(t)$ for all $t \in \mathbb{R}_+$, then the system 
\[ 
(A_{0,(0)}(\cdot)+B_{0}(\cdot)J_{X+Z}(\cdot),\dotsb,A_{r,(0)}(\cdot)+B_{r}(\cdot)J_{X+Z}(\cdot))
\]
is stable.
\end{lemma}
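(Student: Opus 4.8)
The plan is to treat both statements as two halves of a single Riccati comparison argument carried out on the difference $\Delta(\cdot) := \tilde{Y}_{KW}(\cdot) - X(\cdot) - Z(\cdot)$. The engine is an auxiliary linear (Lyapunov-type) differential equation for $\Delta$ of the shape
\[
\frac{d}{dt}\Delta(t) + \mathcal{L}^{*}_{J}(t)\Delta(t) + \Xi(t) = 0, \qquad t\ge 0,
\]
where $J\in\{J_X, J_{X+Z}\}$ is chosen to match the direction being proved and $\Xi(\cdot)\succeq 0$ is a sign-definite residual. Once this is in place, part (i) follows from the positivity of the evolution generated by the \emph{stable} operator $\mathcal{L}^{*}_{J_X}$, and part (ii) follows from the detectability criterion of Lemma \ref{main_results:stochastically_detectable_lemma} applied to $\mathcal{L}^{*}_{J_{X+Z}}$.

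First I would put the interrelated equation \eqref{main_results:g_interrelated_equation} into Lyapunov form. Its coefficients are $A_{k,(0)}(t)+B_k(t)\hat F(t,X_t) = A_k(t)+B_k(t)F(t,X_t)$, which I would split, using the block definition of $J_X$ in \eqref{linear_operators_auxiliary}, as $A_{k,(0)}+B_kJ_X = \bigl(A_k+B_kF(t,X_t)\bigr) - B_{k2}G_X$, where the mismatch $G_X := \hat F_2(t,X_t) - K(t) - W(t)\hat F_1(t,X_t)$ (here $\hat F_1,\hat F_2$ are the top $m_1$ and bottom $m_2$ row-blocks of $\hat F$) lives only in Player~2's channel $B_{k2}$. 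Substituting this splitting into the quadratic and Lyapunov parts of \eqref{main_results:g_interrelated_equation} and completing the square against the $\mathbb{R}_{22}(t,X_t+Z_t)$-weighted term rewrites \eqref{main_results:g_interrelated_equation} as an equation for $X+Z$ driven by $\mathcal{L}^{*}_{J_X}$ together with $\mathcal{G}(t,X_t)$, the Player~2 mismatch contributing a residual quadratic form whose sign is controlled by $\mathbb{R}_{22}(t,X_t+Z_t)\succ 0$. In parallel I would use Proposition \ref{function_g:proposition_2} with $\varTheta = F(t,0)+J_X$ to expand both $\mathcal{G}(t,X_t)$ and $\mathcal{G}(t,\tilde Y_{KW}(t))$ in the same operator $\mathcal{L}^{*}_{J_X}$, and, writing $\mathcal{G}_{KW}(t,Y)$ for the right-hand operator of \eqref{pr:sgrde_kw} (so that $\tfrac{d}{dt}\tilde Y_{KW}+\mathcal{G}_{KW}(t,\tilde Y_{KW})=0$), replace $\tfrac{d}{dt}\tilde Y_{KW}$ by $-\mathcal{G}_{KW}(t,\tilde Y_{KW})$. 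Subtracting the two expansions produces the target $\Delta$-equation with operator $\mathcal{L}^{*}_{J_X}$.

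The crux, and the step I expect to be the main obstacle, is verifying $\Xi(\cdot)\succeq 0$. The residual collects three contributions: (a) the supersolution defect of $X+Z$, which by Proposition \ref{function_g:proposition_3} together with the block (Schur-complement) inversion of $R(t,X_t+Z_t)^{-1}$ equals $-V^{\top}\mathbb{R}_{22}^{\sharp}(t,X_t+Z_t)^{-1}V\succeq 0$ (with $V$ the Schur-reduced quantity formed from $N(t,X_t,Z_t)$ exactly as $V_{(h)}$ in \eqref{algorithm:vxz}), nonnegative precisely because $\mathbb{R}_{22}^{\sharp}\prec 0$ on $\mathrm{Dom}\,\mathcal{G}$; (b) the gap $\mathcal{G}(t,\tilde Y_{KW})-\mathcal{G}_{KW}(t,\tilde Y_{KW})$ measuring Player~2's use of the suboptimal feedback $(K,W)$, which is sign-definite thanks to $R_W+\sum_k B_{kW}^{\top}\tilde Y_{KW}B_{kW}\prec 0$; and (c) a perfect-square gain-mismatch term governed by Proposition \ref{function_g:proposition_1}. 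Checking that these signs conspire so that $\Xi\succeq 0$ is where the negative-definite sign conditions and the admissibility $(K,W)\in\mathcal{A}^\Sigma$ are essential (Lemma \ref{pr:lemma}(ii) being used to keep $(t,\tilde Y_{KW}(t))\in\mathrm{Dom}\,\mathcal{G}$ throughout). With $\Xi\succeq 0$ secured, part (i) is immediate: under the stability hypothesis on $(A_{0,(0)}+B_0J_X,\dots,A_{r,(0)}+B_rJ_X)$, the operator $\mathcal{L}^{*}_{J_X}$ generates an exponentially stable positive evolution $\mathcal{T}_{J_X}$, so the unique bounded $\theta$-periodic solution of the $\Delta$-equation admits the representation $\Delta(t)=\int_t^{\infty}\mathcal{T}_{J_X}(s,t)[\Xi(s)]\,ds\succeq 0$, i.e.\ $\tilde Y_{KW}(t)\succeq X(t)+Z(t)$.

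For part (ii) I would run the same reduction but expand with $\varTheta=F(t,0)+J_{X+Z}$, so that the difference equation for $\Delta$ is instead driven by $\mathcal{L}^{*}_{J_{X+Z}}$, and arrange its residual as $\Xi'(t)=\sum_{k=0}^{r}\mathcal{C}_k^{\top}(t)\mathcal{C}_k(t)\succeq 0$. Since $\Delta\succeq 0$ is then a bounded $\theta$-periodic positive semidefinite solution of a Lyapunov differential equation of exactly the form in Lemma \ref{main_results:stochastically_detectable_lemma}(b), it remains to certify detectability of $[\mathcal{C}_0,\dots,\mathcal{C}_r; A_{0,(0)}+B_0J_{X+Z},\dots,A_{r,(0)}+B_rJ_{X+Z}]$. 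This I would obtain from the admissibility of $(K,W)$: because $A_{k,(0)}+B_kJ_{X+Z}=A_{kK}+B_{kW}\hat F_1(t,X_t+Z_t)$ and the auxiliary system $(A_{0K},\dots,A_{rK})$ is stable by $(K,W)\in\mathcal{A}^\Sigma$, a stacked output-injection construction in the spirit of the $\hat C_k,\hat\varTheta$ device used in the proof of Proposition \ref{main_results:stabilizable_detectable} recovers $(A_{0K},\dots,A_{rK})$ and hence yields detectability. Invoking the implication (b)$\Rightarrow$(a) of Lemma \ref{main_results:stochastically_detectable_lemma} then shows $(A_{0,(0)}+B_0J_{X+Z},\dots,A_{r,(0)}+B_rJ_{X+Z})$ is stable, which completes the proof.
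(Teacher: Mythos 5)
Your overall route is exactly the paper's: the same comparison variable $\tilde{Y}_{KW}-X-Z$, the same completion-of-squares machinery from Propositions \ref{function_g:proposition_1}--\ref{function_g:proposition_3} to write both the interrelated equation \eqref{main_results:g_interrelated_equation} and the auxiliary Riccati equation \eqref{pr:sgrde_kw} in terms of the common operators $\mathcal{L}^{*}_{J_X}$ (for part (i)) and $\mathcal{L}^{*}_{J_{X+Z}}$ (for part (ii)), the positive-evolution representation of the bounded periodic solution for (i) (the paper routes this through Lemma \ref{main_results:stochastically_detectable_lemma}, but your integral formula is the same mechanism), and stacked outputs plus the detectability implication (b)$\Rightarrow$(a) of Lemma \ref{main_results:stochastically_detectable_lemma} for (ii). A minor bookkeeping remark on part (i): in the paper the supersolution-defect term $-V^{\top}\mathbb{R}_{22}^{\sharp}(t,X_t+Z_t)^{-1}V$ does not survive into the final residual --- it cancels against the Schur decomposition \eqref{lemma_proof_2}, leaving only $\hat{H_1}^{\top}\hat{H_1}$ (the perfect square $H_1^{\top}H_1$ plus the $\tilde{F}_{KW}$-gap weighted by $-[R_W+\sum_k B_{kW}^{\top}\tilde{Y}_{KW}B_{kW}]\succ 0$); the $V$-type term only reappears as a third output block in part (ii). Since all three of your candidate pieces are positive semidefinite, this discrepancy is harmless.

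There is, however, one concrete misstep in your part (ii): the claim that the output-injection device ``recovers $(A_{0K},\dots,A_{rK})$,'' whose stability you then take from $(K,W)\in\mathcal{A}^{\Sigma}$. That cancellation is not achievable from the residual you have. The only output block living in Player 1's channel is $\sqrt{-\tfrac{1}{r+1}\left[R_W(t)+\sum_{k=1}^r B_{kW}^{\top}(t)\tilde{Y}_{KW}(t)B_{kW}(t)\right]}\,\bigl[\hat{F}_1(t,X_t+Z_t)-\tilde{F}_{KW}(t)\bigr]$; injecting it with gain proportional to $-B_{kW}(t)$ times the inverse weight (this is exactly the paper's $\hat{\varTheta}$) turns $A_{k,(0)}+B_kJ_{X+Z}=A_{kK}+B_{kW}\hat{F}_1(t,X_t+Z_t)$ into $A_{kK}+B_{kW}\tilde{F}_{KW}$, \emph{not} into $A_{kK}$: cancelling $B_{kW}\hat{F}_1(t,X_t+Z_t)$ wholesale would require an output proportional to $\hat{F}_1(t,X_t+Z_t)$ itself, which appears nowhere in the residual (it is shifted by $\tilde{F}_{KW}$). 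Consequently the stability certificate underwriting detectability is not stability of the open-loop auxiliary system $(A_{0K},\dots,A_{rK})$ but stability of the closed loop $(A_{0K}+B_{0W}\tilde{F}_{KW},\dots,A_{rK}+B_{rW}\tilde{F}_{KW})$, which holds because $\tilde{Y}_{KW}$ is the \emph{stabilizing} solution of \eqref{pr:sgrde_kw} --- a property also supplied by $(K,W)\in\mathcal{A}^{\Sigma}$, so the repair is one line. As written, though, your detectability verification anchors on the wrong closed loop and would not go through literally; with that substitution the argument coincides with the paper's proof.
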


\begin{proof}
First, we prove the (i.). From Proposition \ref{function_g:proposition_2} we deduce 
\begin{align*}
\mathcal{G}&(t, X_t + Z_t) = \mathcal{L}^{*}_{J_{X}}(t)(X(t) + Z(t)) +\hat{F}^{\top}_1(t, X_t)L^{\top}_{KW}(t)+\hat{F}_1(t, X_t)L_{KW}(t)+ \hat{F}_1^{\top}(t, X_t) R_{W}(t)\hat{F}_1(t, X_t)\\
&+  M_K(t) - \left[ J_{X}(t)- \hat{F}(t, X_t+Z_t) \right]^\top \left[ R(t) + \sum_{k=1}^{r}B_k^\top(t) (X(t) + Z(t)) B_k(t) \right] \left[ J_{X}(t)- \hat{F}(t, X_t+Z_t) \right],
\end{align*}
where $\hat{F}_1(t, X_t)=\begin{bmatrix} \mathbb{I}_{m_1} & \mathbb{O}_{m_2} \end{bmatrix}\hat{F}(t, X_t)$, $\mathcal{L}^{*}_{J_{X}}(\cdot)$ is defined in \eqref{linear_operators_xkw}, $J_{X}(\cdot)$ and $\hat{F}(\cdot, \cdot)$ is defined in \eqref{linear_operators_auxiliary}.
Combining \eqnref{function_g:proposition_1_xz} and \eqnref{main_results:g_interrelated_equation}, we obtain that $X(\cdot) + Z(\cdot)$ solves following differential equation:
\begin{equation}
\label{lemma_proof_1}
\begin{aligned}
\frac{d}{dt}& (X(t) + Z(t)) + \mathcal{L}^{*}_{J_{X}}(t)(X(t) + Z(t)) + \hat{F}^{\top}_1(t, X_t)L^{\top}_{KW}(t)+ \hat{F}_1(t, X_t)L_{KW}(t)+ \hat{F}_1^{\top}(t, X_t) R_{W}(t)\hat{F}_1(t, X_t)\\
&+ M_K(t)- \left[ J_{X}(t)- \hat{F}(t, X_t+Z_t) \right]^\top \left[ R(t) + \sum_{k=1}^{r}B_k^\top(t) (X(t) + Z(t)) B_k(t) \right] \left[ J_{X}(t)- \hat{F}(t, X_t+Z_t) \right] \\
&+ \left[ N_1(t, X_t, Z_t)-R_{12}(t,X_t + Z_t)R_{22}(t,X_t + Z_t)^{-1}N_2(t, X_t,Z_t) \right]^{\top} \mathbb{R}_{22}^{\sharp}(t, X_t + Z_t)^{-1}\\
& \quad \times \left[ N_1(t, X_t, Z_t)-R_{12}(t,X_t + Z_t)R_{22}(t,X_t + Z_t)^{-1}N_2(t, X_t,Z_t) \right]=0,t \geq 0,
\end{aligned}     
\end{equation}
where $\mathbb{R}_{22}^{\sharp}(t, X_t + Z_t)$ is defined in \eqref{pf:sign_conditions_1e} and 
\[
\begin{bmatrix} N_1(t, X_t,Z_t) \\ N_2(t, X_t,Z_t)  \end{bmatrix}=\begin{bmatrix} B_{01}^{\top}(t)Z(t) + \sum_{k=1}^{r}B_{k1}^{\top}(t)Z(t)(A_{k,(0)}(t)+ B_{k}(t)\hat{F}(t,X_t))\\ B_{02}^{\top}(t)Z(t) + \sum_{k=1}^{r}B_{k2}^{\top}(t)Z(t)(A_{k,(0)}(t)+ B_{k}(t)\hat{F}(t,X_t)) \end{bmatrix}\,\,\text{for all}\,\,t \in \mathbb{R}_+.
\]
Moreover,
\begin{equation}
\label{lemma_proof_2}
\begin{aligned}
& \left[ J_{X}(t)- \hat{F}(t, X_t+Z_t) \right]^\top \left[ R(t) + \sum_{k=1}^{r}B_k^\top(t) (X(t) + Z(t)) B_k(t) \right] \left[ J_{X}(t)- \hat{F}(t, X_t+Z_t) \right]\\
=& \left[\hat{F}_1(t, X_t)- \hat{F}_1(t, X_t + Z_t)\right]^\top \mathbb{R}_{22}^{\sharp}(t, X_t + Z_t)\left[\hat{F}_1(t, X_t)- \hat{F}_1(t, X_t + Z_t)\right]+H^{\top}_1(t)H_1(t)\\
=& \left[ N_1(t, X_t, Z_t)-R_{12}(t,X_t + Z_t)R_{22}(t,X_t + Z_t)^{-1}N_2(t, X_t,Z_t) \right]^{\top} \mathbb{R}_{22}^{\sharp}(t, X_t + Z_t)^{-1}\\
& \quad \times \left[ N_1(t, X_t, Z_t)-R_{12}(t,X_t + Z_t)R_{22}(t,X_t + Z_t)^{-1}N_2(t, X_t,Z_t) \right]+H_1^{\top}(t)H_1(t),
\end{aligned}
\end{equation}
where $H_1(t)=\begin{bmatrix}R_{22}(t,X_t + Z_t)^{-\frac{1}{2}}R_{21}(t,X_t + Z_t) & R_{22}(t,X_t + Z_t)^{\frac{1}{2}}\end{bmatrix} \left[ J_{X}(t)- \hat{F}(t, X_t + Z_t) \right]$.
Next, by virtue of Proposition \ref{function_g:proposition_2}, $\tilde{Y}_{KW}(\cdot)$ may be rewritten in the form:
\begin{equation}
\label{lemma_proof_3}
\begin{aligned}
&\frac{d}{dt} \tilde{Y}_{KW}(t) + \mathcal{L}^{*}_{J_{X}}(t)\tilde{Y}_{KW}(t)+ M_K(t) +\hat{F}_1(t, X_t)^{\top} R_{W}(t)\hat{F}_1(t, X_t)+L_{KW}^{\top}(t)\hat{F}_1^{\top}(t, X_t)+L_{KW}(t)\hat{F}_1(t, X_t)\\
&- \left[ \hat{F}_1(t, X_t) - \tilde{F}_{KW}(t) \right]^\top \left[ R_W(t) + \sum_{k=1}^r B_{kW}^{\top}(t)\tilde{Y}_{KW}(t)B_{kW}(t)\right] \left[ \hat{F}_1(t, X_t) - \tilde{F}_{KW}(t) \right]= 0,t \geq 0,
\end{aligned} 
\end{equation}
where $\tilde{F}_{KW}(t)=$
\[
-\left[R_W(t) + \sum_{k=1}^r B_{kW}^{\top}(t)\tilde{Y}_{KW}(t)B_{kW}(t)\right]^{-1}\left[B^{\top}_{0W}(t)\tilde{Y}_{KW}(t) + \sum_{k=1}^r B^{\top}_{kW}(t)\tilde{Y}_{KW}(t)A_{kK}(t) + L^{\top}_{KW}(t)\right],\forall t \in \mathbb{R}_+
\]
Subtracting \eqnref{lemma_proof_1} from \eqnref{lemma_proof_3} and combining it with \eqnref{lemma_proof_2} yields:
\begin{align*}
    &\frac{d}{dt} (\tilde{Y}_{KW}(t)-X(t) - Z(t)) + \mathcal{L}^{*}_{J_{X}}(t)(\tilde{Y}_{KW}(t)-X(t) - Z(t)) +\hat{H_1}^{\top}(t)\hat{H_1}(t),t \geq 0,\quad \hat{H_1}^{\top}(t)\hat{H_1}(t)=\\
    &H_1^{\top}(t)H_1(t)- \left[ \hat{F}_1(t, X_t) - \tilde{F}_{KW}(t) \right]^\top \left[ R_W(t) + \sum_{k=1}^r B_{kW}^{\top}(t)\tilde{Y}_{KW}(t)B_{kW}(t)\right] \left[ \hat{F}_1(t, X_t) - \tilde{F}_{KW}(t) \right].   
\end{align*}
Since the system $(A_{0,(0)}(\cdot)+B_{0}(\cdot)J_{X}(\cdot),\dotsb,A_{r,(0)}(\cdot)+B_{r}(\cdot)J_{X}(\cdot))$ is stable and $\hat{H_1}^{\top}(t)\hat{H_1}(t)\succeq0$ for all $t \in \mathbb{R}_+$, by Lemma \ref{main_results:stochastically_detectable_lemma}, we have $\tilde{Y}_{KW}(t) \succeq X(t) + Z(t)$ for all $t \in \mathbb{R}_+$. This completes the proof of part (i.).

Now, we proceed to prove the (ii.). Similarly, by Proposition \ref{function_g:proposition_2} and \ref{function_g:proposition_3}, and through combining \eqnref{main_results:function_g} with \eqnref{main_results:g_interrelated_equation}, we can derive $X(\cdot) + Z(\cdot)$ solves following differential equation:
\begin{equation}
\label{lemma_proof_4}
\begin{aligned}
&\frac{d}{dt} (X(t) + Z(t)) + \mathcal{L}^{*}_{J_{X+Z}}(t)(X(t) + Z(t)) + M_K(t)\\
&+\hat{F}^{\top}_1(t, X_t+Z_t)L^{\top}_{KW}(t)+\hat{F}_1(t, X_t+Z_t)L_{KW}(t)+ \hat{F}_1(t, X_t+Z_t)^{\top} R_{W}(t)\hat{F}_1(t, X_t+Z_t)\\
&- \left[ J_{X+Z}(t)- \hat{F}(t, X_t + Z_t) \right]^\top \left[ R(t) + \sum_{k=1}^{r}B_k^\top(t) (X(t) + Z(t)) B_k(t) \right] \left[ J_{X+Z}(t)- \hat{F}(t, X_t + Z_t) \right]\\
&+ \left[ N_1(t, X_t, Z_t)-R_{12}(t,X_t + Z_t)R_{22}(t,X_t + Z_t)^{-1}N_2(t, X_t,Z_t) \right]^{\top} \mathbb{R}_{22}^{\sharp}(t, X_t + Z_t)^{-1}\\
& \quad \times \left[ N_1(t, X_t, Z_t)-R_{12}(t,X_t + Z_t)R_{22}(t,X_t + Z_t)^{-1}N_2(t, X_t,Z_t) \right]=0,t \geq 0,
\end{aligned}     
\end{equation}
where $\mathcal{L}^{*}_{J_{X+Z}}$ is defined in \eqref{linear_operators_xzkw}.
And 
\begin{equation}
\label{lemma_proof_5}
\begin{aligned}
&\left[ J_{X+Z}(t)- \hat{F}(t, X_t + Z_t) \right]^\top \left[ R(t) + \sum_{k=1}^{r}B_k^\top(t) (X(t) + Z(t)) B_k(t) \right] \left[ J_{X+Z}(t)- \hat{F}(t, X_t + Z_t) \right]\\
=& \left[\hat{F}_1(t, X_t + Z_t)- \hat{F}_1(t, X_t + Z_t)\right]^\top \mathbb{R}_{22}^{\sharp}(t, X_t + Z_t)^{-1}\left[\hat{F}_1(t, X_t + Z_t)- \hat{F}_1(t, X_t + Z_t)\right]+H_2^{\top}(t)H_2(t)\\
=&H_2^{\top}(t)H_2(t),
\end{aligned}    
\end{equation}
where $H_2(t)=\begin{bmatrix}R_{22}(t,X_t + Z_t)^{-\frac{1}{2}}R_{21}(t,X_t + Z_t) & R_{22}(t,X_t + Z_t)^{\frac{1}{2}}\end{bmatrix} \left[ J_{X+Z}(t)- \hat{F}(t, X_t + Z_t) \right] $.
Also, $\tilde{Y}_{KW}(\cdot)$ may be rewritten in the form:
\begin{equation}
\label{lemma_proof_6}
\begin{aligned}
&\frac{d}{dt} \tilde{Y}_{KW}(t) + \mathcal{L}^{*}_{J_{X+Z}}(t)\tilde{Y}_{KW}(t)+ M_K(t) \\
&+\hat{F}_1(t, X_t+Z_t)^{\top} R_{W}(t)\hat{F}_1(t, X_t+Z_t)+L_{KW}^{\top}(t)\hat{F}_1^{\top}(t, X_t+Z_t)+L_{KW}(t)\hat{F}_1(t, X_t+Z_t)\\
&- \left[ \hat{F}_1(t, X_t+Z_t) - \tilde{F}_{KW}(t) \right]^\top \left[ R_W(t) + \sum_{k=1}^r B_{kW}^{\top}(t)\tilde{Y}_{KW}(t)B_{kW}(t)\right] \left[ \hat{F}_1(t, X_t+Z_t) - \tilde{F}_{KW}(t) \right] = 0,t \geq 0.
\end{aligned} 
\end{equation}
Subtracting \eqnref{lemma_proof_4} from \eqnref{lemma_proof_6} combining it with \eqnref{lemma_proof_5} yields:
\begin{align*}
    &\frac{d}{dt} (\tilde{Y}_{KW}(t)-X(t) - Z(t)) + \mathcal{L}^{*}_{J_{X+Z}}(t)(\tilde{Y}_{KW}(t)-X(t) - Z(t)) + H_2^{\top}(t)H_2(t)\\
    &-\left[ \hat{F}_1(t, X_t+Z_t) - \tilde{F}_{KW}(t) \right]^\top \left[ R_W(t) + \sum_{k=1}^r B_{kW}^{\top}(t)\tilde{Y}_{KW}(t)B_{kW}(t)\right] \left[ \hat{F}_1(t, X_t+Z_t) - \tilde{F}_{KW}(t) \right]\\
    &- \left[ N_1(t, X_t, Z_t)-R_{12}(t,X_t + Z_t)R_{22}(t,X_t + Z_t)^{-1}N_2(t, X_t,Z_t) \right]^{\top} \mathbb{R}_{22}^{\sharp}(t, X_t + Z_t)^{-1}\\
    & \quad \times \left[ N_1(t, X_t, Z_t)-R_{12}(t,X_t + Z_t)R_{22}(t,X_t + Z_t)^{-1}N_2(t, X_t,Z_t) \right]=0,t \geq 0.   
\end{align*}
Let $U(t)=\tilde{Y}_{KW}(t)-X(t) - Z(t)$, we have $U(t)\succeq 0$ for all $t \in \mathbb{R}_+$ and
\begin{align*}
    \frac{d}{dt} U(t)+\mathcal{L}^{*}_{J_{X+Z}}(t)U(t)+\sum_{k=0}^{r}\hat{E}_k^{\top}(t)\hat{E}_k(t)=0,t \geq 0,
\end{align*}
where $\hat{E}_k(t)\in\mathbf{R}^{[m_2 + m_1(r+2)]\times n}(k=0,\dotsb,r)$
\[
    \hat{E}_k(t)=\begin{pmatrix}  \frac{1}{\sqrt{r+1}}H_2(t) \\ \mathbb{O}_{m_1 \times n}^{\times (k)}\\\sqrt{-\frac{1}{r+1} \left[ R_W(t) + \sum_{k=1}^r B_{kW}^{\top}(t)\tilde{Y}_{KW}(t)B_{kW}(t)\right]} \left[ \hat{F}_1(t, X_t+Z_t) - \tilde{F}_{KW}(t) \right]\\\mathbb{O}_{m_1 \times n}^{\times (r-k)}\\ \sqrt{-\frac{1}{r+1}\mathbb{R}_{22}^{\sharp}(t, X_t + Z_t)^{-1}}\left[ N_1(t, X_t, Z_t)-R_{12}(t,X_t + Z_t)R_{22}(t,X_t + Z_t)^{-1}N_2(t, X_t,Z_t) \right]\end{pmatrix}.
\]
For all $t \in \mathbb{R}_+$, select $\hat{\varTheta}(t)\in\mathbf{R}^{n\times [m_2 + m_1(r+2)]}$ in the following form:
\[
\hat{\varTheta}(t)=\begin{pmatrix} \mathbb{O}_{n \times m_2}^{\top}\\ -\sqrt{-(r+1)\left[ R_W(t) + \sum_{k=1}^r B_{kW}^{\top}(t)\tilde{Y}_{KW}(t)B_{kW}(t)\right]^{-1}}^{\top}(B_{01}(t)+B_{02}(t)W(t))^{\top}\\ \dotsb \\-\sqrt{-(r+1)\left[ R_W(t) + \sum_{k=1}^r B_{kW}^{\top}(t)\tilde{Y}_{KW}(t)B_{kW}(t)\right]^{-1}}^{\top}(B_{r1}(t)+B_{r2}(t)W(t))^{\top} \\ \mathbb{O}_{n \times m_1}^{\top} \end{pmatrix}^{\top}.
\]
We obtain that 
\[
(A_{0,(0)}(\cdot)+B_{0}(\cdot)J_{X+Z}(\cdot)+\hat{\varTheta}(\cdot)\hat{E}_0(\cdot),\dotsb,A_{r,(0)}(\cdot)+B_{r}(\cdot)J_{X+Z}(\cdot)+\hat{\varTheta}(\cdot)\hat{E}_r(\cdot)) 
\]
is stable, which implies the system 
\[
[\hat{E}_0(\cdot),\dotsb,\hat{E}_r(\cdot);A_{0,(0)}(\cdot)+B_{0}(\cdot)J_{X+Z}(\cdot),\dotsb,A_{r,(0)}(\cdot)+B_{r}(\cdot)J_{X+Z}(\cdot)]
\] is \textit{stochastically detectable}.
By Lemma \ref{main_results:stochastically_detectable_lemma}, we have the system $(A_{0,(0)}(\cdot)+B_{0}(\cdot)J_{X+Z}(\cdot),\dotsb,A_{r,(0)}(\cdot)+B_{r}(\cdot)J_{X+Z}(\cdot))$ is stable. Thus completing the proof of (ii.).
\end{proof}

\begin{theorem}
\label{main_results:theorem}
Assume the following conditions hold:
\begin{itemize}
    \item $R_{22}(t) \succ 0$ for all $t \in \mathbb{R}_+$.
    \item The set $\mathcal{A}^{\Sigma}$ is non-empty.
    \item There exists a continuous matrix valued functions set $\{E_0(\cdot),  \dots, E_r(\cdot)\}$ satisfying $\sum_{k=0}^{r}E_{k}^{\top}(t)E_{k}(t) = M(t)-L(t)R(t)L^{\top}(t) $ for all $t \in \mathbb{R}_{+}$ and the system 
    \[
    \left[  E_{0,(0)}(\cdot),\ldots, E_{r,(0)}(\cdot); A_{0,(0)}(\cdot), \ldots, A_{r,(0)}(\cdot) \right]
    \]
    is \textit{stochastically detectable}.
\end{itemize}
Then, we have:
\item[1.] 
The sequences $\{Z^{(h)}(\cdot)\}_{h\geq0}$, $\{X^{(h)}(\cdot)\}_{h\geq0}$ are well defined by \eqref{algorithm:initial} \eqref{algorithm:external_circulation}  \eqref{algorithm:internal_circulation}, and for each $h = 0,1,2\ldots$ the following items are fulfilled:
\begin{enumerate}
    \item [$a_h$.] The system $\left( A_{0,(h)}(\cdot)+B_{02}(\cdot)T(\cdot,X^{(h)}(\cdot),Z^{(h)}(\cdot)),\dotsb,A_{r,(h)}(\cdot)+B_{r2}(\cdot)T(\cdot,X^{(h)}(\cdot),Z^{(h)}(\cdot)) \right)$ is stable, where $T(t,X^{(h)}_t,Z^{(h)}_t)(\forall t \in \mathbb{R}_+)=$
    \[
    -[ R_{22}(t)+\sum_{k=1}^{r}B^{\top}_{k2}(t)(X^{(h)}(t)+Z^{(h)}(t))B_{k2}(t) ]^{-1} [  B_{02}^{\top}(t)Z(t) + \sum_{k=1}^{r}B_{k2}^{\top}(t)Z(t)(A_{k,(0)}(t)+ B_{k}(t)\hat{F}(t,X_t)) ];
    \]
    \item [$b_h$.] Let $(K(\cdot), W(\cdot)) \in \mathcal{A}^\Sigma$ is arbitrary but fixed and $\tilde{Y}_{KW}(\cdot)$ be the $\theta$-periodic and stabilizing solution to the corresponding Riccati differential equation \eqref{pr:sgrde_kw}. Then we have $\tilde{Y}_{KW}(t) \succeq   X^{(h)}(t) + Z^{(h)}(t)$ for all $t \in \mathbb{R}_+ $;
    \item [$c_h$.] For all $t \in \mathbb{R}_+, (t, X^{(h)}_t),(t, X^{(h)}_t+ Z^{(h)}_t) \in \text{Dom } \mathcal{G}$ and
    \begin{align*}
    &\frac{d}{dt}(X^{(h)}(t)+Z^{(h)}(t))+ \mathcal{G}(t, X^{(h)}_t+Z^{(h)}_t)=-V_{(h)}^{\top}(t)\mathbb{R}_{22}^{\sharp}(t, X^{(h)}_t+ Z^{(h)}_t)^{-1} V_{(h)}(t);
    \end{align*}
    where $V_{(h)}(\cdot)$ is defined in \eqref{algorithm:vxz}.
    \item [$d_h$.] The system $(A_{0,(0)}(\cdot)+B_{0}(\cdot)J_{X^{(h)}+Z^{(h)}}(\cdot),\dotsb,A_{0,(0)}(\cdot)+B_{r}(\cdot)J_{X^{(h)}+Z^{(h)}}(\cdot))$ is stable,
    where $J_{X^{(h)}+Z^{(h)}}(\cdot)$ is defined in \eqref{linear_operators_auxiliary}.
\end{enumerate}
\item[2.] 
$\lim_{h\rightarrow\infty}X^{(h)}(t)=\tilde{X}(t),\forall t \in \mathbb{R}_+$, where $\tilde{X}(\cdot)$ is a unique $\theta$-periodic and stabilizing solution of the SGTRDE \eqref{pf:gtrde}.
\end{theorem}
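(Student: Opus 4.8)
The plan is to prove Part 1 by induction on $h$, maintaining the four invariants $a_h,b_h,c_h,d_h$ simultaneously, and then to derive Part 2 from a monotone‑convergence argument applied to $\{X^{(h)}\}$. The engine of the induction is the cyclic use of the two halves of Lemma~\ref{main_results:lemma} together with the existence/uniqueness result of Proposition~\ref{main_results:stabilizable_detectable}. For the base case I would first observe that $\mathcal{A}^{\Sigma}\neq\emptyset$ supplies a pair $(K,W)$ with $(A_{0,(0)}+B_{02}K,\dots,A_{r,(0)}+B_{r2}K)$ stable; taking the feedback $\left[\begin{smallmatrix}0\\K\end{smallmatrix}\right]$ this shows $[A_{0,(0)},\dots;B_0,\dots]$ is stochastically stabilizable. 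Combined with the detectability hypothesis and the fact that the free term $M-LR^{-1}L^{\top}=\sum_k E_k^{\top}E_k\succeq0$ of \eqref{algorithm:initial}, Proposition~\ref{main_results:stabilizable_detectable} (specialised to the definite‑sign equation) yields a unique $\theta$‑periodic stabilizing $Z^{(0)}\succeq0$, giving $a_0$ and well‑definedness. Since $\hat F(t,0)=0$, the feedback $J_{X^{(0)}}$ reduces to $\left[\begin{smallmatrix}0\\K\end{smallmatrix}\right]$, so the stability hypothesis of Lemma~\ref{main_results:lemma}(i) is exactly the $\mathcal{A}^{\Sigma}$‑stability of $(A_{0K},\dots,A_{rK})$; this delivers $b_0$, namely $\tilde Y_{KW}\succeq X^{(1)}$. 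Lemma~\ref{pr:lemma} gives $(t,0),(t,\tilde Y_{KW}(t))\in\mathrm{Dom}\,\mathcal{G}$, and the differential identity of $c_0$ comes from rewriting \eqref{algorithm:initial} through Proposition~\ref{function_g:proposition_3}; finally Lemma~\ref{main_results:lemma}(ii) converts $b_0$ into $d_0$.

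For the inductive step $h\to h+1$ the key bookkeeping is that $X^{(h+1)}=X^{(h)}+Z^{(h)}$ makes $J_{X^{(h)}+Z^{(h)}}=J_{X^{(h+1)}}$, so $d_h$ is precisely the stability of the $J_{X^{(h+1)}}$‑system. Using the algebraic identity $A_{k,(0)}+B_k J_{X^{(h+1)}}=A_{k,(h+1)}+B_{k2}\bigl(W\hat F_1+K-\hat F_2\bigr)$, this stability says that the inner dynamics $A_{k,(h+1)}$ are $B_{k2}$‑stabilizable, which is exactly the stabilizability input needed to solve the definite‑sign equation \eqref{algorithm:internal_circulation} via Proposition~\ref{main_results:stabilizable_detectable}; this produces the $\theta$‑periodic stabilizing $Z^{(h+1)}\succeq0$, hence $a_{h+1}$ and $\mathbb{R}_{22}(t,X^{(h+2)})\succ0$. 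Feeding the same $d_h$ into Lemma~\ref{main_results:lemma}(i) with $(X,Z)=(X^{(h+1)},Z^{(h+1)})$ gives $b_{h+1}$; the remaining half of the domain condition, $\mathbb{R}_{22}^{\sharp}(t,X^{(h+2)})\prec0$, follows from $b_{h+1}$ and $(t,\tilde Y_{KW}(t))\in\mathrm{Dom}\,\mathcal{G}$ by operator‑monotonicity of the Schur complement, completing $c_{h+1}$; then Lemma~\ref{main_results:lemma}(ii) yields $d_{h+1}$, closing the loop. I would flag that the delicate point here is sequencing the domain membership of each new iterate correctly, since it both uses the comparison $b_{h+1}$ and is a standing hypothesis of the lemma.

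For Part 2, since every $Z^{(h)}\succeq0$ the outer iterates obey $X^{(h+1)}=X^{(h)}+Z^{(h)}\succeq X^{(h)}$, so $\{X^{(h)}(t)\}$ is nondecreasing in the Löwner order, while $b_h$ gives the uniform bound $0\preceq X^{(h)}(t)\preceq\tilde Y_{KW}(t)$. A monotone bounded sequence in $\mathbb{S}^n$ converges pointwise, whence $X^{(h)}(t)\to\tilde X(t)$ and $Z^{(h)}(t)=X^{(h+1)}(t)-X^{(h)}(t)\to0$, with $\theta$‑periodicity inherited by the limit. To identify $\tilde X$ as a solution I would pass to the limit in $c_h$: because $V_{(h)}$ from \eqref{algorithm:vxz} is linear in $Z^{(h-1)}\to0$, the right‑hand side $-V_{(h)}^{\top}\mathbb{R}_{22}^{\sharp}(t,X^{(h)}_t+Z^{(h)}_t)^{-1}V_{(h)}\to0$, leaving $\frac{d}{dt}\tilde X+\mathcal{G}(t,\tilde X_t)=0$. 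The membership $(t,\tilde X_t)\in\mathrm{Dom}\,\mathcal{G}$ is preserved in the limit via the squeeze $0\preceq X^{(h)}\preceq\tilde X\preceq\tilde Y_{KW}$: one gets $\mathbb{R}_{22}(t,\tilde X)\succeq R_{22}(t)\succ0$, and operator‑monotonicity of the Schur complement together with $\mathbb{R}_{22}^{\sharp}(t,\tilde Y_{KW})\prec0$ forces $\mathbb{R}_{22}^{\sharp}(t,\tilde X)\prec0$; uniform invertibility of $R(t,\tilde X)$ on $\mathbb{R}_+$ then follows from continuity and $\theta$‑periodicity (compactness of $[0,\theta]$).

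The main obstacle, and the step I expect to require the most care, is establishing that $\tilde X$ is genuinely \emph{stabilizing} rather than merely a bounded periodic solution, because pointwise monotone convergence does not by itself transfer exponential mean‑square stability of the closed loop to the limit. Crucially, the naive route of Proposition~\ref{main_results:stabilizable_detectable} is unavailable here: writing the limiting equation via Proposition~\ref{function_g:proposition_2} with $\Theta=F(t,\tilde X_t)$ gives a Lyapunov‑type equation whose free term equals $\sum_k E_k^{\top}E_k+(F+R^{-1}L^{\top})^{\top}R(t,\tilde X)(F+R^{-1}L^{\top})$, and since $R(t,\tilde X)$ carries the indefinite signature $\mathrm{diag}(-I_{m_1},I_{m_2})$ this term is \emph{not} sign‑definite, so Lemma~\ref{main_results:stochastically_detectable_lemma} cannot be invoked directly. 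I would instead extract the stabilizing property from the accumulated invariants: show that the closed‑loop stabilities certified in $d_h$ (equivalently $a_h$) are uniform in $h$ — the uniform bound $X^{(h)}\preceq\tilde Y_{KW}$ and the Lyapunov certificates built inside the proof of Lemma~\ref{main_results:lemma} provide this — and then pass the stability of the $J_{X^{(h+1)}}$‑systems to the limit $J_{\tilde X}$, translating back through the $B_{k2}$‑feedback identity above to the true feedback $F(\cdot,\tilde X)$. Once the closed‑loop system $(A_0+B_0F(\cdot,\tilde X),\dots,A_r+B_rF(\cdot,\tilde X))$ is shown stable, uniqueness of the stabilizing solution is inherited from Theorem~5.6.5 and Corollary~5.6.7 of \cite{Dragan2013book}, which completes the proof.
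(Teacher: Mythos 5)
Your induction for Part 1 and the monotone-convergence opening of Part 2 track the paper's proof almost step for step: same base case via Proposition \ref{main_results:stabilizable_detectable}, same engine of Lemma \ref{main_results:lemma}(i)--(ii) driven by the identity $J_{X^{(h)}+Z^{(h)}}=J_{X^{(h+1)}}$, same Schur-complement monotonicity (the paper cites Corollary 4.5 of \cite{freiling2003}) for domain membership, and the same passage to the limit in $c_h$ using linearity of $V_{(h)}$ in $Z^{(h-1)}\to 0$. The divergence --- and the genuine gap --- is your final step. You correctly diagnose that Lemma \ref{main_results:stochastically_detectable_lemma} cannot certify that the limit $X^{*}=\lim_h X^{(h)}$ is stabilizing, but your substitute, a ``uniform in $h$'' stability of the $J_{X^{(h+1)}}$-closed loops passed to the limit, is asserted rather than proved, and the accumulated invariants do not supply it. Exponential mean-square stability at each fixed $h$ carries no modulus that survives $h\to\infty$: in Lemma \ref{main_results:lemma}(ii), stability is extracted from stochastic detectability of a pair whose output matrices $\hat{E}_k$ contain blocks linear in $Z^{(h)}$ and in $N_1,N_2$, all of which vanish as $Z^{(h)}\to 0$, so the detectability certificates and the Lyapunov comparison $U^{(h)}=\tilde{Y}_{KW}-X^{(h+1)}$ degenerate exactly in the limit; a pointwise limit of stable closed loops can be merely marginally stable. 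The paper never attempts this transfer. It proves only that $X^{*}$ is a $\theta$-periodic solution, then identifies it with the stabilizing solution by a two-sided comparison: Theorems 2 and 3 of \cite{Dragan2020} yield $X^{*}\succeq\tilde{X}$, and defining $(\tilde{K},\tilde{W})$ from $\tilde{X}$ itself and invoking the at-most-one-stabilizing-solution results for the auxiliary equation \eqref{pr:sgrde_kw} (Theorem 5.6.5 and Corollary 5.6.7 of \cite{Dragan2013book}) yields $X^{*}\preceq\tilde{X}$, hence $X^{*}=\tilde{X}$. The stabilizing property is obtained by identification with an already-existing stabilizing solution, sidestepping precisely the limit-of-stabilities problem on which your plan founders.

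There is also a smaller but concrete omission in your inductive step: before invoking Proposition \ref{main_results:stabilizable_detectable} for \eqref{algorithm:internal_circulation} you verify only stochastic stabilizability of $[A_{0,(h+1)}(\cdot),\dots;B_{02}(\cdot),\dots,B_{r2}(\cdot)]$, whereas the proposition equally requires stochastic detectability of $[E_{0,(h+1)}(\cdot),\dots;A_{0,(h+1)}(\cdot),\dots]$ for a factorization $\sum_{k}E_{k,(h+1)}^{\top}E_{k,(h+1)}$ of the now iteration-dependent free term built from $V_{(h)}$ and $\mathbb{R}_{22}^{\sharp}(t,X^{(h)}_t+Z^{(h)}_t)^{-1}$ --- this is no longer the fixed factorization $\{E_0,\dots,E_r\}$ assumed in the theorem's hypotheses. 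The paper discharges it by constructing an explicit output injection $\tilde{\varTheta}^{(h+1)}$ that maps this pair onto the $T$-closed-loop system certified stable by $a_h$, which is the entire reason the invariant $a_h$ is carried through the induction; your proposal never uses $a_h$, the telltale sign of the omission. This gap is repairable along the paper's lines, but the stability-transfer gap in Part 2 is not, absent a new uniform estimate that neither Lemma \ref{main_results:lemma} nor the bound $X^{(h)}\preceq\tilde{Y}_{KW}$ provides.
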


\begin{proof}
\item[$\mathrm{i}$.]  
Let $X^{(0)}(\cdot) = 0 $ and $ Z^{(0)}(\cdot)$ is the $\theta$-periodic and stabilizing solution to the following Riccati differential equations:
\begin{equation}
\label{proof_0}
\begin{aligned}
    &\frac{d}{dt}Z(t)+ A^{\top}_{0,(0)}(t) Z(t)+Z(t)A_{0,(0)}(t)+\sum_{k=1}^{r}A^{\top}_{k,(0)}(t) Z(t)A_{k,(0)}(t) + M(t) -L(t) R(t)^{-1}L^{\top}(t)\\
    &- \left[Z(t)B_{02}(t) + \sum_{k=1}^{r} A^{\top}_{k,(0)}(t)Z(t)B_{k2}(t)\right]\left[R_{22}(t) + \sum_{k=1}^{r}B_{k2}(t)^{\top} Z(t) B_{k2}(t)\right]^{-1}\\
    & \quad \times \left[B^{\top}_{02}(t)Z(t) + \sum_{k=1}^{r} B^{\top}_{k2}(t)Z(t)A_{k,(0)}(t)\right]= 0,t \ge 0.     
\end{aligned}    
\end{equation}
To show that $Z^{(0)}(\cdot)$ is well-defined, we first prove the existence and unique of $\theta$-periodic and stabilizing solution to the above \eqnref{proof_0}. 
Since the set $\mathcal{A}^\Sigma$ is nonempty and set $L^{(0)}(\cdot) = K(\cdot)$, we can find the system \[(A_{0,(0)}(\cdot)+B_{02}(\cdot)L^{(0)}(\cdot), \dots, A_{r,(0)}(\cdot)+B_{r2}(\cdot)L^{(0)}(\cdot))\] associated with the system $(A_{0,(0)}(\cdot)+B_{02}(\cdot)K(\cdot),\dots, A_{r,(0)}(\cdot)+B_{r2}(\cdot)K(\cdot))$ is stable. This means the system $[A_{0,(0)}(\cdot),\dotsb,A_{r,(0)}(\cdot);B_{02}(\cdot),\dotsb,B_{r2}(\cdot)]$ is \textit{stochastically stabilizable}.
Since the system \[ \left[  E_{0,(0)}(\cdot),\ldots, E_{r,(0)}(\cdot); A_{0,(0)}(\cdot), \ldots, A_{r,(0)}(\cdot) \right]\] is \textit{stochastically detectable}. Under Assumption $R_{22}(t) \succ 0$ for all $t \in \mathbb{R}_+$, the \eqnref{proof_0} admits a unique $\theta$-periodic and stabilizing solution $Z^{(0)}(\cdot) $ satisfying $R_{22}(t,Z^{(0)}_t) \succ 0$ for all $t \in \mathbb{R}_+ $ by using Proposition \ref{main_results:stabilizable_detectable}. Therefore, $Z^{(0)}(\cdot) $ is well-defined as the $\theta$-periodic and stabilizing solution to the \eqnref{proof_0}. This also means the system $(A_{0,(0)}(\cdot)+B_{02}(\cdot)T(\cdot,X^{(0)}(\cdot),Z^{(0)}(\cdot)),\dotsb,A_{r,(0)}(\cdot)+B_{r2}(\cdot)T(\cdot,X^{(0)}(\cdot),Z^{(0)}(\cdot)))$ is stable.

Let $(K(\cdot), W(\cdot)) \in \mathcal{A}^\Sigma$ be arbitrary but fixed and $\tilde{Y}_{KW}(\cdot)$ denote the $\theta$-periodic and stabilizing solution to the corresponding Riccati differential equation \eqref{pr:sgrde_kw}. Since $(A_{0,(0)}(\cdot)+B_{0}(\cdot)J_{X^{(0)}}(\cdot),\dotsb,A_{r,(0)}(\cdot)+B_{0}(\cdot)J_{X^{(0)}}(\cdot))$ associated with the system $(A_{0,(0)}(\cdot),\dotsb,A_{r,(0)}(\cdot))$ is stable. From (i.) and (ii.) in Lemma \ref{main_results:lemma}, we get $\tilde{Y}_{KW}(t) \succeq   X^{(1)}(t) + Z^{(1)}(t)$, for all $t \in  \mathbb{R}_+$ and the system $(A_{0,(0)}(\cdot)+B_{0}(\cdot)J_{X^{(0)}+Z^{(0)}}(\cdot),\dotsb,A_{r,(0)}(\cdot)+B_{r}(\cdot)J_{X^{(0)}+Z^{(0)}}(\cdot))$ is stable.

By using Lemma \ref{pr:lemma}, we have  $(t, 0) \in \text{Dom}\,\mathcal{G}$ and $(t, \tilde{Y}_{KW}(t)) \in \text{Dom}\,\mathcal{G}$ for all $t \in \mathbb{R}_+$.
Since $R_{22}(t,Z_t^{(0)}) \\\succ 0$ and $\mathbb{R}(t, X_t^{(0)} + Z_t^{(0)}) \prec \mathbb{R}(t, \tilde{Y}_{KW}(t)) \prec 0$ for all $t \in \mathbb{R}_+$, we obtain that $\mathbb{R}_{22}^\sharp(P^{(0)} + Z^{(0)}) \prec \mathbb{R}_{22}^\sharp(\tilde{P}_{LW}) \prec 0$ by using Lemma \ref{pr:lemma} and Corollary 4.5 in \cite{freiling2003}. It follows that $P^{(0)}, P^{(0)} + Z^{(0)} \in \text{Dom } \mathcal{G}$.
Substituting \eqnref{algorithm:initial} into \eqnref{function_g:proposition_1_xz} then gives:
\begin{align*}
    &\frac{d}{dt}\left( X^{(0)}(t) + Z^{(0)}(t) \right) + \mathcal{G}\left( t, X_t^{(0)} + Z_t^{(0)} \right) = -V_{(0)}^\top(t) \mathbb{R}_{22}^\sharp\left( t, X_t^{(0)} + Z_t^{(0)} \right)^{-1} V_{(0)}(t).
\end{align*}
This completes the proof of statements $a_0$-$d_0$.

\item[$\mathrm{ii}$.]
Assume $h=l-1$, $Z^{(l-1)}(\cdot)$ is well-defined and that $a_{l-1}-d_{l-1}$ hold.  We shall prove that  $Z^{(l)}(\cdot)$ is well-defined and $a_{l}-d_{l}$ hold.

For $h = l $ ,  $Z^{(l)}(\cdot) $ satisfies the following Riccati differential equations:
\begin{equation}
\label{proof_l}
\begin{aligned}
    &\frac{d}{dt}(X^{(l)}(t)+Z(t))+ A^{\top}_{0,(l)}(t) Z(t)+Z(t)A_{0,(l)}(t)+\sum_{k=1}^{r}A^{\top}_{k,(l)}(t) Z(t)A_{k,(l)}(t) +\sum_{k=0}^{r}E_{k,(l)}^{\top}(t)E_{k,(l)}(t)\\
    &- \left[Z(t)B_{02}(t) + \sum_{k=1}^{r} A^{\top}_{k,(l)}(t)Z(t)B_{k2}(t)\right]\left[R_{22}(t) + \sum_{k=1}^{r}B_{k2}(t)^{\top} Z(t) B_{k2}(t)\right]^{-1}\\
    & \quad \times \left[B^{\top}_{02}(t)Z(t) + \sum_{k=1}^{r} B^{\top}_{k2}(t)Z(t)A_{k,(l)}(t)\right]= 0,t \ge 0.     
\end{aligned}
\end{equation}
where $(k=0,\dots,r)$
\[
    E_{k,(l)}(t)=\begin{pmatrix} \mathbb{O}_{m_1 \times n}^{\times (k)} \\ \sqrt{-\frac{1}{r+1}\mathbb{R}_{22}^\sharp\left( t, X_t^{(l-1)} + Z_t^{(l-1)} \right)^{-1} }V_{(l-1)}(t) \\ \mathbb{O}_{m_1 \times n}^{\times (r-k)}\end{pmatrix} \in \mathbb{R}^{n\times [m_1(r+1)]}.
\]

We demonstrate that $Z^{(l)}(\cdot)$ is well-defined. By the Proposition \ref{main_results:stabilizable_detectable} , proving that $Z^{(l)}(\cdot)$ is well-defined is equivalent to proving that the system $[A_{0,(l)}(\cdot),\dots,A_{r,(l)}(\cdot); B_{02}(\cdot),\dots,B_{r2}(\cdot)]$ is \textit{stochastically stabilizable} and $[E_{0,(l)}(\cdot),\dots,E_{r,(l)}(\cdot); A_{0,(l)}(\cdot),\dots,A_{r,(l)}(\cdot)] $ is \textit{stochastically detectable}. To prove the stability of the system $[A_{0,(l)}(\cdot),\dots,A_{r,(l)}(\cdot); B_{02}(\cdot),\dots,B_{r2}(\cdot)]$, it suffices to set  \[L^{(l)}(t) = K(t)+\begin{pmatrix} W(t) & -\mathbb{I}_{m_2} \end{pmatrix}  \hat{F}(t,X^{(l-1)}_t+Z^{(l-1)}_t), \,\,\text{for all}\,\, t \in \mathbb{R}_+. \] Then we can find the system $(A_{0,(l)}(\cdot)+B_{02}L^{(l)}(\cdot), \dots, A_{r,(l)}(\cdot)+B_{02}L^{(l)}(\cdot))$ associated with the system $(A_{0,(0)}(\cdot)+B_{0}(\cdot)J_{X^{(l-1)}+Z^{(l-1)}}(\cdot),\dotsb,A_{r,(0)}(\cdot)+B_{r}(\cdot)J_{X^{(l-1)}+Z^{(l-1)}}(\cdot))$ is stable. This means the system $[A_{0,(l)}(\cdot),\\\dots,A_{r,(l)}(\cdot); B_{02}(\cdot),\dots,B_{r2}(\cdot)]$ \textit{stochastically stabilizable}.

Let 
\[
\tilde{\varTheta}^{(l)}(t) =\begin{pmatrix} \tilde{\varTheta}_{0}^{(l)}(t) & \dots  & \tilde{\varTheta}_{r}^{(l)}(t)\end{pmatrix}\in \mathbb{R}^{n\times m_1(r+1)}, \forall t \in \mathbb{R}_+,
\]
where $(k=0,\cdots,r)$
\[
\tilde{\varTheta}_{k}^{(l)}(t)= \begin{pmatrix}  -\sqrt{(r+1)}(B_{k1}(t)-B_{k2}(t)R_{22}(t,X^{(l)}_t )^{-1}R_{21}(t,X^{(l)}_t ))\sqrt{-\mathbb{R}_{22}^\sharp\left( t, X_t^{(l)}\right)^{-1} } \end{pmatrix}
\]
Then we can find the system $(A_{0,(l)}(\cdot)+\tilde{\varTheta}^{(l)}(\cdot) E_{0,(l)}(\cdot),\dots, A_{r,(l)}(\cdot)+\tilde{\varTheta}^{(l)}(\cdot) E_{r,(l)}(\cdot))$ associated with the system $(A_{0,(l)}(\cdot)+B_{02}(\cdot)T(\cdot,X^{(l-1)}(\cdot),Z^{(l-1)}(\cdot)),\dotsb,A_{r,(h)}(\cdot)+B_{r2}(\cdot)T(\cdot,X^{(l-1)}(\cdot),Z^{(l-1)}(\cdot)))$ is stable. This means the system $[E_{0,(l)}(\cdot),\dots,E_{r,(l)}(\cdot); A_{0,(l)}(\cdot),\dots,A_{r,(l)}(\cdot)] $ is \textit{stochastically detectable}. From Proposition \ref{main_results:stabilizable_detectable}, the \eqnref{proof_l} admits a unique $\theta$-periodic and stabilizing solution $Z^{(l)}(\cdot) $ satisfying $R_{22}(t,X_t^{(l)} + Z_t^{(l)}) \succ 0$ for all $t \in \mathbb{R}_+ $. Therefore, $Z^{(l)}(\cdot) $ is well-defined as the $\theta$-periodic and stabilizing solution to the \eqnref{proof_l} and the system $[A_{0,(1)}(\cdot)+B_{02}(\cdot)T(\cdot,X^{(l)}(\cdot),Z^{(l)}(\cdot)),\dotsb,A_{r,(1)}(\cdot)+B_{r2}(\cdot)T(\cdot,X^{(l)}(\cdot),Z^{(l)}(\cdot))]$ is stable. 

Since $(A_{0}(\cdot)+B_{0}(\cdot)J_{X^{(l-1)}+Z^{(l-1)}}(\cdot),\dotsb,A_{r}(\cdot)+B_{r}(\cdot)J_{X^{(l-1)}+Z^{(l-1)}}(\cdot))$ is stable. From (i.) and (ii.) in Lemma \ref{main_results:lemma}, we get $\tilde{Y}_{KW}(t) \succeq   X^{(l)}(t) + Z^{(l)}(t)$ for all $t \in  \mathbb{R}_+$ and the system 
\[
(A_{0}(\cdot)+B_{0}(\cdot)J_{X^{(l)}+Z^{(l)}}(\cdot),\dotsb,A_{r}(\cdot)+B_{r}(\cdot)J_{X^{(l)}+Z^{(l)}}(\cdot))
\] is stable.

Since $R_{22}(t,X_t^{(l)} + Z_t^{(l)}) \succ 0$ and $\mathbb{R}(t, X_t^{(l)} + Z_t^{(l)}) \prec \mathbb{R}(t, \tilde{Y}_{KW}(t)) \prec 0$ for all $t \in \mathbb{R}_+$, we obtain that $\mathbb{R}_{22}^\sharp(P^{(l)} + Z^{(l)}) \prec \mathbb{R}_{22}^\sharp(\tilde{P}_{LW}) \prec 0$ by using Lemma \ref{pr:lemma} and Corollary 4.5 in \cite{freiling2003}. It follows that $P^{(l)}, P^{(l)} + Z^{(l)} \in \text{Dom } \mathcal{G}$.
Also, substituting \eqnref{proof_l} into \eqnref{function_g:proposition_1_xz}, we obtain:
\begin{align*}
    &\frac{d}{dt}\left( X^{(l)}(t) + Z^{(l)}(t) \right) + \mathcal{G}\left( t, X_t^{(l)} + Z_t^{(l)} \right) = -V_{(l)}^\top(t) \mathbb{R}_{22}^\sharp\left( t, X_t^{(l)} + Z_t^{(l)} \right)^{-1} V_{(l)}(t).
\end{align*}
Thus, we have proved the statements $a_{l}-d_{l}$.

\item[$\mathrm{iii}$.]
By induction, we conclude that for any $h$, $Z^{(h)}(\cdot)$ is well-defined and $a_{h}-c_{h}$ hold. In this recursive process, the sequence $\{X^{(h)}(\cdot)\}_{h\geq0}$ is  monotonically non-decreasing and bounded above, so the sequence $\{Z^{(h)}(\cdot)\}_{h\geq0}$, $\{X^{(h)}(\cdot)\}_{h\geq0}$ is convergent and $\lim_{h\rightarrow\infty}Z^{(h)}(t)=0$ for all $t \in \mathbb{R}_+$. 

Set $X^*(t)=\lim_{h\rightarrow\infty}X^{(h)}(t) \preceq \tilde{Y}_{KW}(t)$ for all $t \in \mathbb{R}_+$. Given that 
\begin{equation}
    \begin{aligned}
    &\frac{d}{dt}X^*(t)+\mathcal{G}(t,X^*_t)=\lim_{h\rightarrow\infty}\frac{d}{dt}X^{(h)}(t)+\mathcal{G}(t,\lim_{h\rightarrow\infty}X^{(h)}_t)\\
    &=-\lim_{h\rightarrow\infty}V_{(l)}^\top(t) \mathbb{R}_{22}^\sharp\left( t, X_t^{(l)} + Z_t^{(l)} \right)^{-1} V_{(l)}(t)=0,\forall t \in \mathbb{R}_+
    \end{aligned}
\end{equation}
it follows that $X^*(\cdot)$ is the $\theta$-periodic solution to SGTRDE \eqref{pf:gtrde}.
Let $\tilde{X}(\cdot)$ is the $\theta$-periodic and stabilizing solution to the SGTRDE \eqref{pf:gtrde}. From the Theorem 2 and Theorem 3 in \cite{Dragan2020}, we have $X^*(t) \succeq \tilde{X}(t)$ for all $t \in \mathbb{R}_+$.

Set \[\tilde{K}(t)=-(R_{22}(t)+\sum_{k=1}^{r}B^{\top}_{k2}(t)\tilde{X}(t)B_{k2}(t) )^{-1}(B_{02}^{\top}(t)\tilde{X}(t) + \sum_{k=1}^{r}B_{k2}^{\top}(t)\tilde{X}(t)A_k(t))\] and \[\tilde{W}(t)=-(R_{22}(t)+\sum_{k=1}^{r}B^{\top}_{k2}(t)\tilde{X}(t)B_{k2}(t))^{-1}(R_{21}(t)+\sum_{k=1}^{r}B^{\top}_{k2}(t)\tilde{X}(t)B_{k1}(t))\] and $\tilde{X}_{\tilde{KW}}(\cdot)$ is the $\theta$-periodic and stabilizing solution to \eqnref{pr:sgrde_kw} associated with $(\tilde{K}(\cdot), \tilde{W}(\cdot))$. 
By Theorem 5.6.5 in \cite{Dragan2013book}, there exists at most one stabilizing solution to the \eqnref{pr:sgrde_kw}. We thus immediately conclude that $\tilde{X}_{\tilde{KW}}(t)=X^*(t)$ for all $t \in \mathbb{R}_+$. Combining Corollary 5.6.7 from the same reference, we have $X^*(t) \preceq \tilde{X}_{\tilde{KW}}(t)=\tilde{X}(t)$ for all $t \in \mathbb{R}_+$. Therefore, $\lim_{h\rightarrow\infty}X^{(h)}(t)= \tilde{X}(\cdot)$ is the $\theta$-periodic and stabilizing solution to the SGTRDE \eqref{pf:gtrde}. 
\end{proof}

\section{Numerical Experiments}
\label{sec:numerical_experiments}

Since problems with periodic coefficients can be decomposed into a series of subproblems with deterministic coefficients, we conduct extensive numerical experiments on randomly generated system parameters across various dimensions to comprehensively validate the effectiveness and robustness of the proposed algorithm for periodic-coefficient systems and problems of varying dimensions. The key parameter settings are as follows:

\begin{table}[h]
\centering
\caption{Summary of experimental parameters}
\label{tab:param_summary}
\begin{tabular}{@{}p{5cm}p{10cm}@{}}
\toprule
Parameter & Setting \\
\midrule
Convergence tolerance & $1 \times 10^{-8}$ \\
Range of system dimensions ($n$) & From 1 to 20 \\
Trials per dimension & Fixed at 1,000 \\
Total number of trials & 20,000 (20 dimensions $\times$ 1,000 trials) \\
$n \times n$ matrices $A_0$, $A_1$, $A_2$ & Randomly generated with elements following the standard normal distribution $\mathcal{N}(0, 1)$ \\
$n \times n$ matrices $B_{01}$, $B_{02}$ & $B_{01} = 3I_{n \times m_1} - 0.5H_1$ and $B_{02} = 7I_{n \times m_2} + 0.5H_2$, where $H_1$ and $H_2$ are random matrices with entries uniformly distributed over $[0, 1]$ \\
$n \times n$ matrices $B_{11}$, $B_{12}$, $B_{21}$, $B_{22}$ & $B_{11}$, $B_{12}$, $B_{21}$, $B_{22}$ are random matrices with entries uniformly distributed over $[0, 0.01]$ \\
$n \times n$ matrix $R_{11}$ & $-4I_{m_1 \times m_1} - U_{11}^{\top}U_{11}$, where $U_{11}$ is a random matrix with entries uniformly distributed over $[0, 1]$ \\
$n \times n$ matrix  $R_{22}$ & $5I_{m_1 \times m_1} + U_{22}^{\top}U_{22}$, where $U_{22}$ is a random matrix with entries uniformly distributed over $[0, 1]$ \\
Matrices $R_{12},R_{21}, L$ & Randomly generated with elements following the uniform distribution over $[0, 1]$ \\
Matrix $M$ & Calculated as $M = U^\top U + 0.1*I_{n \times n} + L \begin{bmatrix}R_{11} & R_{12} \\ R_{21} & R_{22} \end{bmatrix}^{-1} L^{\top}$, where $U$ is random matrices with entries generated with elements following the standard normal distribution $\mathcal{N}(0, 0.1)$ \\
\bottomrule
\end{tabular}
\end{table}

\begin{figure}[htbp]
    \centering
    \includegraphics[width=1\textwidth]{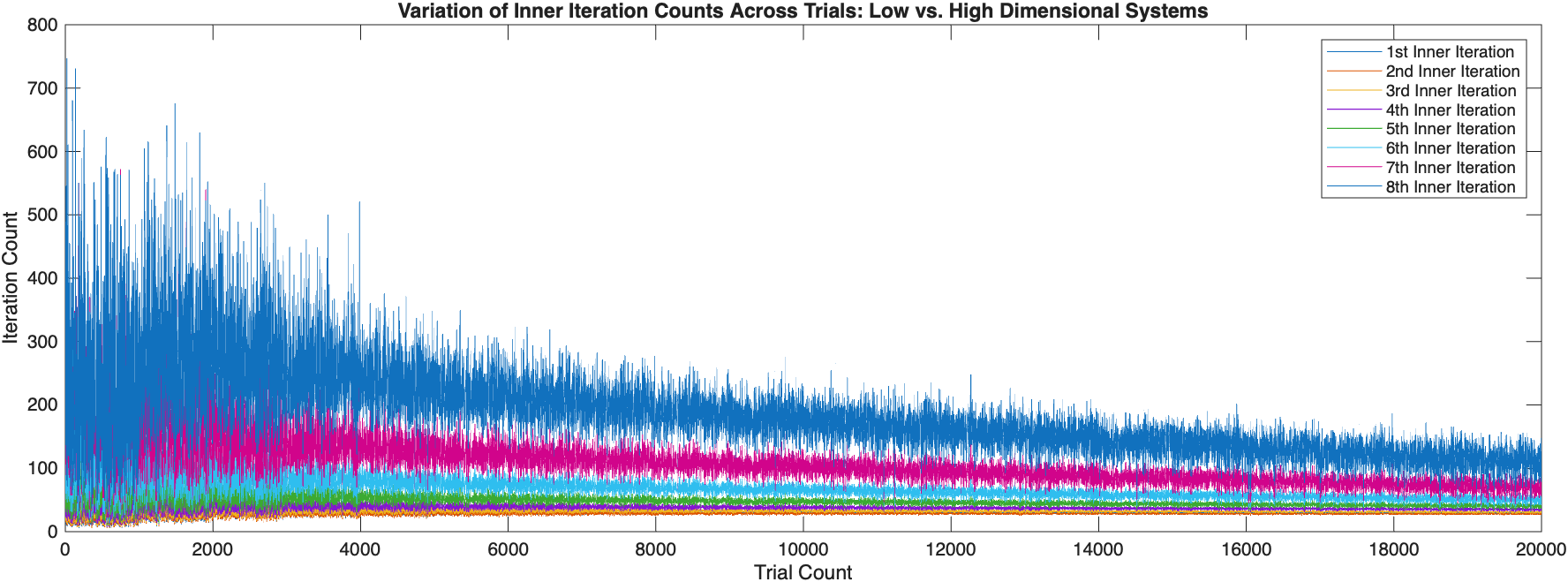} 
    \caption{Inner iteration counts for the first 8 outer iterations, ordered by computation sequence. The plot illustrates the progressive increase in inner iterations as convergence to the stabilizing solution, along with reduced variability of iterations in high-dimensional systems.}
    \label{fig:all_iterations}
\end{figure}

Under MATLAB's default random seed configuration, we performed 1000 experiments for each system with dimensions spanning 1 to 20. Across all 20,000 trials, the final results show distinct convergence patterns by outer iteration count: 266 experiments converged after 8 outer iterations, 1452 after 9, 7822 after 10, 18293 after 11, 19967 after 12, and 19998 after 13. Only 2 experiments required 15 outer iterations to converge. 
Figure \ref{fig:all_iterations} visually illustrates the number of inner iterations per round for the first 8 outer iterations. As observed from the figure, within a single computational instance, the number of inner iterations gradually increases as the solution approaches the stable solution. Additionally, the iteration count exhibits greater variability for low-dimensional systems, while fluctuations in iteration numbers diminish significantly for high-dimensional systems.

\clearpage
\printbibliography

\end{document}